\documentclass[11pt,a4paper]{amsart} 

\usepackage[margin=2.9cm]{geometry}	
\usepackage{amsmath}	
\usepackage{amsthm}	
\usepackage{amssymb}	
\usepackage{extarrows}	
\usepackage{mathtools}	
\usepackage{thmtools}	
\usepackage{amsfonts}	
\usepackage[scr=boondoxo]{mathalfa}	
\usepackage{fontawesome}	
\usepackage{bm}	
\usepackage[shortlabels]{enumitem}	
\usepackage{graphicx}	
\usepackage[font={small}]{caption}	
\usepackage{subcaption}	
\usepackage[dvipsnames]{xcolor}	
\usepackage{comment}	
\usepackage{etoolbox}	

\usepackage{lmodern}	
\usepackage{upgreek}	
\usepackage[doi=true,isbn=true,giveninits=true,maxbibnames=99,backend=biber,url=true]{biblatex}	
\usepackage{hyperref}	
\usepackage{xurl}	
\usepackage[capitalise]{cleveref}	
\hypersetup{colorlinks=true,linktoc=section,urlcolor={Aquamarine!85!black},
	citecolor={YellowOrange!85!black},linkcolor={violet}}										
\makeatletter	
\AtBeginDocument{
\patchcmd\label@noarg{\edef\@tempb}{\protected@edef\@tempb}{}{}}	
\makeatother	
\renewbibmacro*{in:}{
	\iffieldundef{journaltitle}																	
	{}																							
	{\printtext{\bibstring{in}\intitlepunct}}													
}

\theoremstyle{plain}	
\newtheorem{thm}{Theorem}[section]	
\crefname{thm}{Theorem}{Theorems}	
\newtheorem{lem}[thm]{Lemma}	
\crefname{lem}{Lemma}{Lemmata}	
\newtheorem{prop}[thm]{Proposition}	
\newtheorem{cor}[thm]{Corollary}	
\theoremstyle{definition}	
\newtheorem{dfn}[thm]{Definition}	
\newtheorem{eg}[thm]{Example}	
\crefname{eg}{Example}{Examples}	
\theoremstyle{plain}	
\theoremstyle{plain}	
\newtheorem*{thm*}{Theorem}	
\newtheorem*{lem*}{Lemma}	
\newtheorem*{prop*}{Proposition}	
\newtheorem*{cor*}{Corollary}	
\theoremstyle{definition}	
\newtheorem*{dfn*}{Definition}	
\newtheorem*{eg*}{Example}	
\newtheorem*{not*}{Notations}	
\theoremstyle{remark}	
\newtheorem*{rem*}{Remark}	
\newtheorem*{que*}{Question}	
\theoremstyle{plain}	
\newtheorem{thmabc}{Theorem}	
\newtheoremstyle{named}{}{}{\itshape}{}{\bfseries}{.}{.5em}{\thmnote{#3}}	
\theoremstyle{named}	
\numberwithin{equation}{section}	
\patchcmd{\subsection}{-.5em}{1sp}{}{}

\makeatletter	
\def\hrulefill{\noindent\leavevmode\leaders\hrule\hfill\kern\z@}	
\makeatother	
\renewcommand{\mod}{\operatorname{mod}}
\newcommand{\such}{\,:\,}	
\DeclareMathOperator{\dist}{dist}	
\DeclareMathOperator{\id}{id}	
\DeclareMathOperator{\Arg}{Arg}	
\newcommand{\inline}[1]{\quad\text{#1}\quad}	
\newcommand{\afterline}[1]{\qquad\text{#1~}}	
\newcommand{\inmath}[1]{\text{ #1~}}			
\ExplSyntaxOn	
\NewDocumentCommand{\definealphabet}{mmmm}{
	\int_step_inline:nnn { `#3 } { `#4 }{
		\cs_new_protected:cpx { #1 \char_generate:nn { ##1 }{ 11 } }{
			\exp_not:N #2 { \char_generate:nn { ##1 } { 11 } }									
		}																						
	}																							
}																								
\ExplSyntaxOff																					
\definealphabet{mb}{\mathbb}{A}{Z}	
\definealphabet{mc}{\mathcal}{A}{Z}	
\definealphabet{ms}{\mathscr}{A}{Z}	
\DeclareMathOperator{\jul}{\mathscr{J}}	
\DeclareMathOperator{\fat}{\mathscr{F}}	
\newcommand{\CC}{{\hat{\mathbb{C}}}}
\newcommand{\seq}[1]{{\bm{{#1}}}}	
\DeclareMathOperator{\shift}{\uptau}

\title{Julia Sets for Sequences of Monomials}
\author[L.~Pathirana]{Lubashan Pathirana}
\address{K{\o}benhavns Universitet}
\email{lpk@math.ku.dk}
\author[C.~L.~Petersen]{Carsten Lunde Petersen}
\address{K{\o}benhavns Universitet}
\email{lunde@math.ku.dk}
\author[D.~Vardakis]{Dimitris Vardakis}
\address{Julius-Maximilians-Universit{\"a}t W{\"u}rzburg}
\email{jimvardakis@gmail.com}

\begin{document}


\begin{abstract}
	We study Julia sets arising from non-autonomous iteration of monomials on the Riemann sphere.
	We give a precise description of the Julia set associated to an arbitrary sequence of monomials in terms of their coefficients and degrees.
	Combined with the basic invariance relation for non-autonomous Julia sets,
	this yields a plethora of striking examples in polynomial non-autonomous dynamics:
	finite Julia sets of arbitrary cardinality,
	non-trivial Julia sets with non-empty interior,
	Julia sets that are perfect but not uniformly perfect,
	and Julia sets with empty interior but positive area.
	We also describe a connection between autonomous and non-autonomous Julia sets of monomial sequences through Kuratowski limits.
\end{abstract}




\maketitle



\section{Introduction}
\label{sec:intro}

Let~$f:\CC\to\CC$ be a rational map on the Riemann sphere~$\CC:=\mbC\cup\{\infty\}$,
and consider the family of its iterates~$\mcF_f = \{f^{\circ n} \such n\in\mbN\}$.
It is a standard problem to describe on which set this family is normal;
the maximal such open set is known as the Fatou set of~$f$,~$\fat(f)$ and its complement as the Julia set of~$f$,~$\jul(f)$.
In this \emph{autonomous} setting we know that Julia sets of maps of degree at least~$2$ are non-empty perfect sets.
In principle, much of the dynamics of the family~$\mcF_f$ is reflected in the geometry of~$\jul(f)$ and~$\fat(f)$;
see, for example, \cite[\S\S2--3]{Milnor2006} or \cite[\S2.1]{Beardon1991}.

In this text, we consider the corresponding \emph{non-autonomous} problem:
Given a sequence~$\seq{f}=(f_1,f_2,\dots)$ of rational maps from~$\CC$ to~$\CC$,
we consider the non-autonomous iterates
\[
	F^{(n)}_\seq{f} := f_n\circ\dots\circ f_1 \afterline{for} n\in\mbN,
\]
and we define the \emph{Fatou set}~$\fat(\seq{f})$ of the sequence~$\seq{f}$ as maximal the set of points on which the family~$\{F^{(n)}_\seq{f}\}_{n\ge1}$ is normal.
We call its complement~$\jul(\seq{f}) := \CC\setminus \fat(\seq{f})$ the \emph{Julia set} of the sequence~$\seq{f}$.
Even for polynomial sequences, the non-autonomous setting is substantially more flexible than the autonomous one, and familiar conclusions such as perfectness or emptiness of interior may fail.
The objective of this paper is to provide simple examples 
based on ``monomials'' of this flexibility.

For sequences of monomials, the dynamics are radial in the sense that the transformation~$z\mapsto z e^{i\theta}$ leaves the Julia and Fatou sets unchanged.
This yields a particularly transparent class of examples while still exhibiting phenomena impossible in the autonomous setting.
In fact, our main theorem below gives a complete and precise description of Julia sets for monomial sequences.

\begin{restatable*}[]{thm}{monomialjulia}
\label{thm:monomial_julia_accumulation}
	Consider a sequence of monomials~$\seq{p}=(p_n)_{n\ge1}$ with~$p_n(z)=a_n z^{d_n}$,~$a_n\in\mbC\setminus\{0\}$ and~$d_n\ge1$,
	and assume that~$d_n\ge2$ for infinitely many~$n$.
	Set
	\[
		D_n := \prod\nolimits_{i=1}^n d_i,
		\quad A_n := \prod\nolimits_{i=1}^n a_i^{D_n/D_i}
		\inline{and}
		s_n := \frac{1}{D_n} \log|A_n| = \sum\nolimits_{i=1}^n \frac{1}{D_i} \log|a_i|,
	\]
	and let~$S$ be the set of accumulation points of the sequence~$(s_n)_{n\geq1}$ in~$\overline{\mbR} := \mbR\cup\{\pm\infty\}$.
	Then,
	\[
		\jul(\seq{p})
		=
		\bigcup\nolimits_{s\in S}\{z\in\CC \such |z|=e^{-s}\}
	\]
	with the conventions~$e^{-\infty}=0$ and~$e^{-(-\infty)}=+\infty$.
\end{restatable*}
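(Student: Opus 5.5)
First compute the iterates explicitly. By induction, $F^{(n)}_\seq{p}(z) = A_n z^{D_n}$, since $p_{n+1}(A_n z^{D_n}) = a_{n+1}A_n^{d_{n+1}} z^{D_{n+1}}$ and $A_{n+1}=a_{n+1}A_n^{d_{n+1}}$. Hence
\[
	\log|F^{(n)}_\seq{p}(z)| = D_n\bigl(s_n + \log|z|\bigr).
\]
Since $d_n\ge2$ infinitely often, $D_n\to\infty$. The whole question therefore reduces to the sign behaviour of $s_n+\log|z|$ along subsequences.

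\emph{Fatou part.} Let $r=|z|$ with $-\log r\notin S$; here $r=0$ corresponds to $-\infty$ and $r=\infty$ to $+\infty$. Since $S$ is closed in $\overline{\mbR}$, there is a neighbourhood $I$ of $-\log r$ in $\overline{\mbR}$ disjoint from $S$. Then for all large $n$ we have $s_n\notin I$. Take a small annular (or disc) neighbourhood $U$ of $z$ on which $-\log|w|$ stays within a smaller interval $I'\Subset I$. Every subsequence $(n_k)$ has a further subsequence along which $s_{n_k}$ converges in $\overline{\mbR}$ to some $s\in S$, so $s\notin I$.
\begin{itemize}
	\item If $s>\sup I'$, then $s_{n_k}+\log|w|\ge\delta>0$ uniformly on $U$ for large $k$. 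Hence $F^{(n_k)}\to\infty$ locally uniformly on $U$, since $D_{n_k}\delta\to\infty$.
	\item If $s<\inf I'$, then symmetrically $F^{(n_k)}\to0$ uniformly on $U$.
\end{itemize}
This shows normality on $U$. For $z=0$ or $z=\infty$ the same argument applies on a small disc; note that if $+\infty\notin S$ then $(s_n)$ is bounded above, and near $0$ we have $\log|w|\to-\infty$. This case is handled uniformly by the formula. So $z\in\fat(\seq{p})$.

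\emph{Julia part.} Suppose $|z|=e^{-s}$ with $s\in S$, and pick $s_{n_k}\to s$. For finite $s$, any neighbourhood $U$ of $z$ contains points $w_\pm$ with $\log|w_\pm|=-s\pm\epsilon$. Then $s_{n_k}+\log|w_+|\to\epsilon$ and $s_{n_k}+\log|w_-|\to-\epsilon$, so $F^{(n_k)}(w_+)\to\infty$ while $F^{(n_k)}(w_-)\to0$. Any subsequential locally uniform limit on a connected $U$ would be a continuous map taking values $\infty$ and $0$ at these points. By Hurwitz-type reasoning it would then have to be a constant (or a map with constant modulus behaviour); to rule out a nonconstant limit, observe that on the circle $|w|=|z|$ the iterates $F^{(n_k)}$ have degree $D_{n_k}\to\infty$. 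Concretely, on a small disc $U$ around $z$, the argument of $A_{n_k}w^{D_{n_k}}$ winds over the full circle many times, so the spherical derivative blows up. Alternatively, a limit $g$ must satisfy $|g|=\infty$ on $\{|w|>|z|\}\cap U$ and $|g|=0$ on $\{|w|<|z|\}\cap U$, which contradicts continuity of a meromorphic (or constant $\infty$) limit. The second argument is cleaner and is the one I would use.

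For $s=+\infty$ (so $z=0$): near $0$, points with $|w|=e^{-t}$ for any fixed $t$ have $s_{n_k}-t\to+\infty$, so $F^{(n_k)}(w)\to\infty$, while $F^{(n_k)}(0)=0$ for all $k$. Any limit would be $\infty$ on a punctured disc yet $0$ at $0$, which is impossible. For $s=-\infty$, apply the symmetric argument at $\infty$. Hence $z\in\jul(\seq{p})$.

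The main obstacle is the Fatou part at the endpoints $z\in\{0,\infty\}$ and the uniformity of the estimates on neighbourhoods. I would handle this by working with the explicit formula $D_n(s_n+\log|w|)$ and using compactness of $\overline{\mbR}$ for the subsequence extraction.
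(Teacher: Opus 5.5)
Your proof is correct. The Fatou half coincides with the paper's argument. Both show that, when $-\log|z|\notin S$, every subsequence of $(F^{(n)})$ has a further subsequence along which $s_{n_k}$ stays on one side of $-\log|z|$ by a definite margin. That forces uniform convergence to $0$ or to $\infty$ on a neighbourhood of $z$. The only difference is organisational: you absorb $z\in\{0,\infty\}$ into the same scheme. The paper instead handles those two points in a separate lemma, using the explicit bound $|F^{(n)}|\le e^{-D_n}$ on a small disc when $\sup_n s_n<\infty$, and conjugation by $z\mapsto 1/z$ for $\infty$.

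The Julia half is genuinely different.
\begin{itemize}
\item The paper uses Marty's criterion. At $z_k=e^{-s_{n_k}}e^{i\theta}\to z$ one has $|F^{(n_k)}(z_k)|=1$ and $|(F^{(n_k)})'(z_k)|=D_{n_k}/|z_k|\to\infty$, so the spherical derivatives are not locally bounded. For $z=0$ it argues that equicontinuity at the common fixed point $0$ would force $\sup_n s_n<\infty$.
\item You observe instead that a locally uniform limit of any further subsequence would equal $\infty$ on $\{|w|>|z|\}$ and $0$ on $\{|w|<|z|\}$ near $z$. In the case $z=0$ it would equal $\infty$ on a punctured disc and $0$ at the centre. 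Either way this contradicts continuity of the limit at $z$.
\end{itemize}
Your route needs nothing beyond pointwise convergence and continuity of the limit. It treats finite and infinite $s$ by one mechanism, and it makes visible that non-normality comes from the limit jumping across the circle. The paper's route is more quantitative: it locates explicit points on the circles $|w|=e^{-s_{n_k}}$ where the spherical derivative blows up at rate $D_{n_k}$.

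Two things to clean up in a write-up.
\begin{itemize}
\item Delete the sentences about Hurwitz-type reasoning and winding arguments; they are not proofs. A continuous limit taking the values $\infty$ and $0$ at two points $w_\pm$ is not by itself contradictory. What you need, and do have, is that these values are taken on whole open sets on either side of the circle through $z$.
\item The claim that $s_n\notin I$ for all large $n$ fails for an open $I$ whose endpoints lie in $S$; it holds only if $\overline{I}\cap S=\emptyset$. Your subsequence argument never uses it, since it only needs $I\cap S=\emptyset$.
\end{itemize}
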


Thus the geometry of~$\jul(\seq{p})$ is determined exactly by the accumulation points of the real sequence~$(s_n)_{n\geq1}$.
Here are a few examples.
By prescribing a non-empty closed set~$S\subseteq\overline{\mbR}$, one obtains a large family of radial Julia sets:
Cantor sets of circles, including examples with positive planar area and empty interior, perfect Julia sets that are not uniformly perfect, among other examples.
We provide a few such examples and constructions at \cref{sec:examples and contructions}.

We also give two complementary descriptions of the monomial Julia set at \cref{sec:orbital description,sec:Kuratowski}.
One is dynamical:~$\jul(\seq{p})$ can be recovered as the derived set of the level-by-level preimages of an appropriate sequence of points under the non-autonomous iterative compositions~$F^{(n)}_{\seq{p}}$.
The other is asymptotic:~$\jul(\seq{p})$ is the Kuratowski limit superior of the autonomous Julia sets of the individual non-autonomous iterative compositions~$F^{(n)}_{\seq{p}}$ for each~$n$,
i.e.~of the autonomous dynamical systems~$\{\big(F^{(n)}_{\seq{p}}\big)^{\circ k}\}_{k\geq1}$.


\section{Preliminaries and notation}
\label{sec:prelim and notations}


We follow standard notation from complex dynamics and normal family theory; see, for example, \cite[\S2]{Milnor2006} or \cite[\S2.1]{Beardon1991}.
We write~$\mbC$ for the complex plane,~$\CC := \mbC\cup\{\infty\}$ for the Riemann sphere,~$\overline{\mbR} := \mbR\cup\{\pm\infty\}$ for the extended real line,
and
\[
	D(z,r) := \{w\in\mbC \such |w-z|<r\},
	\inline{and}
	C(z,r) := \{w\in\mbC \such |w-z|=r\}.
\]
for the open disk and the circle, respectively, in~$\mbC$ centred at~$z\in\mbC$ with radius~$r>0$.
More generally, if~$d$ is a metric on a space~$X$, we write
\[
	D_d(x,r) := \{y\in X \such d(y,x)<r\}
	\inline{and}
	C_d(x,r) := \{y\in X \such d(y,x)=r\}
\]
for the open~$d$-ball and the circle, respectively, of radius~$r>0$ centred at~$x \in X$.
When a sequence~$(x_n)_{n\in\mbN}$ of points in~$X$ converges to~$x \in X$ in the metric~$d$ as~$n\to+\infty$ we write~$x_n \to_d x$.
If no metric is specified, it is understood that the metric used is the Euclidean norm on the respective space.
When we have a sequence of points~$(a_n)_{n\in\mbN}$ in~$\mbR$ converging to a point~$a\in\overline{\mbR}$ we mean that either~$a\in\mbR$ and~$a_n \to a$ or~$a_n\to\pm\infty$ in the Euclidean norm.
We also set~$\log0 := -\infty$,~$\log(+\infty) := +\infty$,~$e^{-\infty} := 0$,~$e^{+\infty} := +\infty$, and~$|\infty| := +\infty$.

Throughout, a \emph{domain} means a connected open set.

We equip~$\CC$ with the standard spherical metric~$\sigma$, see for example \cite[\S2.1]{Beardon1991}.
All convergence and normality statements below are understood with respect to~$\sigma$.
Since
\[
	\sigma(z,w)
	\leq \frac{\pi}{2} \frac{2|z-w|}{\sqrt{1+|z|^2}\sqrt{1+|w|^2}}
	\leq \pi |z-w| \afterline{for any} z,w\in\mbC,
\]
in many of our proofs below it suffices to use the Euclidean metric in~$\mbC$ whenever the point at infinity need not be concerned.
When~$w=\infty$, we instead have
\[
	\sigma(z,\infty)
	=\sigma(z^{-1},0)
	\leq \frac{\pi}{2} \frac{2}{\sqrt{1+|z|^2}} \afterline{for any} z\in\mbC\setminus\{0\},
\]
and also~$\sigma(0,\infty) = \pi$ and~$\sigma(\infty,\infty)=0$.
Additionally, the following inequality holds
\begin{equation}
\label{eq:chordal less than spherical}
	\frac{2|z-w|}{\sqrt{1+|z|^2}\sqrt{1+|w|^2}} \leq \sigma(z,w) \afterline{for all} z,w\in\mbC.
\end{equation}
The quantity on the left-hand side of \eqref{eq:chordal less than spherical} is known as the \emph{chordal metric}.

We use the letters~$m$,~$n$,~$k$,~$i$ and~$j$ to denote natural, or sometimes integer, numbers.

Boldface characters and indexed characters inside parentheses are used to denote sequences.
Calligraphic characters are used to denote families of maps.


\subsection{Normal families}
\label{sec:normal fam}

We denote the space of all holomorphic mappings from a domain~$U\subseteq\CC$ to~$\CC$ by~$\mcH(U,\CC)$.
We denote the space of all holomorphic \emph{mappings} from~$\CC$ to~$\CC$ by~$\mcH(\CC)$;
this consists of rational maps and the constant map~$\infty$.

Let~$U\subseteq\CC$ be a domain and consider the holomorphic mapping~$f$ and a sequence of holomorphic mappings~$(f_n)_{n\geq1}$ in~$\mcH(U,\CC)$.
We say that~$(f_n)$ converges \emph{locally uniformly on~$U$ to~$f$} if for every compact~$K\subseteq U$,
\[
	\sup\nolimits_{z\in K}\sigma\big(f_n(z),f(z)\big) \to 0
	\afterline{as} n\to+\infty.
\]

A family~$\mcF$ of holomorphic mappings in~$\mcH(U,\CC)$ is called \emph{normal on~$U\subseteq\CC$} if every sequence in~$\mcF$ has a subsequence that converges locally uniformly on~$U$ to a holomorphic mapping from~$U$ to~$\CC$;
a family~$\mcF$ of holomorphic functions from~$U\subseteq\mbC$ to~$\mbC$ is called \emph{normal on~$U$} if every sequence in~$\mcF$ has a subsequence that converges on~$U$ to a holomorphic function from~$U$ to~$\mbC$.

A family~$\mcF$ of holomorphic mappings in~$\mcH(U,\CC)$ is called \emph{equicontinuous on~$U$ with respect to the spherical metric} or \emph{spherically equicontinuous on~$U$} when for all~$w \in U$ for any~$\varepsilon>0$ there exists a~$\delta>0$ such that for every~$f\in\mcF$
\[
	\sigma(f(z),f(w))<\varepsilon \afterline{whenever} z \in U \inmath{satisfies} \sigma(z,w)<\delta.
\]

A family of holomorphic functions~$\mcF$ is \emph{locally bounded} on a domain~$U\subseteq\mbC$ if for each compact subset~$K\subseteq U$, there is some~$M_K<+\infty$ such that~$|f(z)|\le M_K$ for all~$z \in K$ and for all~$f\in \mcF$;
see \cite[Chapter~VII; Definition~2.7 \& Lemma~2.8]{Conway1978}.

We shall use the following two standard criteria to determine the normality of a given family;
see \cite[Theorem~3.3.2]{Beardon1991} and \cite[Chapter~VII; Theorem~2.9]{Conway1978}, respectively.

\begin{thmabc}[Montel's Theorem]
\label{thm:Montel theorem spherical}
	A family~$\mcF$ of holomorphic mappings from a domain~$U\subseteq\CC$ to~$\CC$ is normal on~$U$ if, and only if, it is equicontinuous on~$U$ with respect to the spherical metric.
\end{thmabc}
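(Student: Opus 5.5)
This is the Arzelà--Ascoli theorem with the compact metric space $(\CC,\sigma)$ as target, together with the fact that locally uniform limits of holomorphic mappings are holomorphic mappings. I would prove the two implications separately, working throughout with the spherical metric on both domain and target.

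\emph{Normal $\Rightarrow$ equicontinuous.} I would argue by contradiction. Suppose equicontinuity fails at some $w\in U$. Then there are $\varepsilon>0$, points $z_n\in U$ with $\sigma(z_n,w)\to0$, and maps $f_n\in\mcF$ with $\sigma(f_n(z_n),f_n(w))\ge\varepsilon$. By normality, pass to a subsequence with $f_n\to f$ locally uniformly on $U$; the limit $f$ is continuous into $(\CC,\sigma)$. Fix a compact neighbourhood $K$ of $w$ in $U$. The triangle inequality gives
\[
	\sigma(f_n(z_n),f_n(w))\le \sigma(f_n(z_n),f(z_n))+\sigma(f(z_n),f(w))+\sigma(f(w),f_n(w)).
\]
The two outer terms are bounded by $\sup_K\sigma(f_n,f)\to0$, and the middle term tends to $0$ by continuity of $f$. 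This contradicts the lower bound $\varepsilon$.

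\emph{Equicontinuous $\Rightarrow$ normal.} Take a sequence $(f_n)$ in $\mcF$ and a countable dense set $E\subseteq U$.
\begin{enumerate}[(i)]
	\item Since $\CC$ is compact, a Cantor diagonal argument gives a subsequence converging pointwise at every point of $E$.
	\item Fix a compact $K\subseteq U$ and $\varepsilon>0$. For each $w\in K$, equicontinuity gives a $\delta_w$; cover $K$ by finitely many balls $D_\sigma(w_j,\delta_{w_j}/2)$ and choose points $e_j\in E$ close to each $w_j$. Uniform Cauchy-ness on $K$ then follows from convergence at the finitely many $e_j$ and an $\varepsilon/3$ argument. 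Since $\CC$ is complete, this yields a locally uniform limit $f\colon U\to\CC$, and $f$ is continuous.
	\item It remains to show that $f$ is a holomorphic mapping, possibly the constant $\infty$. Near a point $w$ with $f(w)\ne\infty$, continuity of $f$ and uniform convergence give a small disk on which all $f_n$ with $n$ large are finite and bounded. There $f$ is a uniform limit of holomorphic functions, hence holomorphic by Weierstrass. Near a point with $f(w)=\infty$, apply the same argument to $1/f_n\to 1/f$.
	\item The set where $f=\infty$ is therefore either discrete (poles) or all of $U$, since $U$ is connected and the identity theorem applies to $1/f$. In both cases $f\in\mcH(U,\CC)$.
\end{enumerate}

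The main obstacle is step~(iii): checking that the limit is a genuine holomorphic mapping into $\CC$, not merely a continuous map. The chart-switching argument via $1/f_n$ handles this. A smaller point is that equicontinuity as defined is pointwise. Step~(ii) upgrades it on compact sets through the finite subcover, so no uniform equicontinuity hypothesis is needed.
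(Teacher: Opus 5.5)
Your proposal is correct and matches the standard proof of this result, which the paper does not prove but cites from Beardon: the triangle-inequality contradiction for one direction, and for the other the Arzel\`a--Ascoli argument (diagonal extraction on a countable dense set, a finite subcover upgrading pointwise equicontinuity to uniform Cauchy convergence on compacta, completeness of $(\CC,\sigma)$) followed by Weierstrass's theorem in the charts $w$ and $1/w$. Two cosmetic remarks: step~(iv) is unnecessary, since being a holomorphic mapping into $\CC$ is a local property already established in step~(iii), and if $\infty\in U$ the ``small disk'' in step~(iii) should be taken in the coordinate $1/z$.
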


\begin{thmabc}[Montel's Theorem for holomorphic functions on~$\mbC$]
\label{thm:Montel for holomorphic}
	A family~$\mcF$ of holomorphic functions in a domain~$U\subseteq\mbC$ is normal on~$U$ if, and only if, it is locally bounded on~$U$.
\end{thmabc}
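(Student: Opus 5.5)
The statement immediately above is the second Montel theorem, the criterion for families of holomorphic functions on a domain $U\subseteq\mbC$: normality is equivalent to local boundedness. In the paper this is a cited standard result (\cite[Chapter~VII; Theorem~2.9]{Conway1978}) rather than something proved here. If I were to prove it, I would follow the classical Arzel\`a--Ascoli route. Throughout, normality is taken in the sense defined just above: every sequence has a subsequence converging locally uniformly to a holomorphic function with values in $\mbC$.

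\emph{Necessity.} Suppose $\mcF$ is normal but not locally bounded. Then there are a compact $K\subseteq U$, functions $f_n\in\mcF$ and points $z_n\in K$ with $|f_n(z_n)|\ge n$. Pass to a subsequence along which $f_n$ converges uniformly on $K$ to a holomorphic $f:U\to\mbC$. The limit $f$ is continuous, hence bounded on $K$, and uniform convergence on $K$ then bounds $|f_n|$ on $K$ uniformly in $n$. This contradicts $|f_n(z_n)|\ge n$.

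\emph{Sufficiency.} Assume $\mcF$ is locally bounded. The plan has three steps.
\begin{enumerate}[(i)]
\item \textbf{Local equicontinuity.} Fix a closed disk $\overline{D(z_0,2r)}\subseteq U$ and let $M$ bound $\mcF$ on it. Cauchy's estimate gives $|f'(z)|\le M/r$ for every $f\in\mcF$ and every $z\in\overline{D(z_0,r)}$. Hence $|f(z)-f(w)|\le (M/r)|z-w|$ on that disk, so $\mcF$ is uniformly Lipschitz and in particular equicontinuous there. A covering argument then gives equicontinuity on every compact subset of $U$.
\item \textbf{Diagonal extraction.} Take a countable dense set $\{w_j\}\subseteq U$. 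Given any sequence in $\mcF$, the values at each $w_j$ lie in a bounded subset of $\mbC$. A diagonal argument yields a subsequence that converges at every $w_j$.
\item \textbf{Upgrade to locally uniform convergence.} Exhaust $U$ by compacts $K_m$. On each $K_m$, combine equicontinuity with the density of $\{w_j\}$ in a standard $\varepsilon/3$ argument to show the subsequence is uniformly Cauchy there. By Weierstrass' theorem the locally uniform limit is holomorphic. It is finite-valued because the bounds pass to the limit.
\end{enumerate}

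The only step requiring any care is step (iii). Even there the main point is bookkeeping: using finitely many $w_j$ to form an $\varepsilon$-net of each $K_m$, which compactness provides. Since the paper quotes this theorem, I would simply cite it and not reproduce the argument.
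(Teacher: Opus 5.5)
Your proof is correct and is the standard one. The paper gives no proof of its own and only cites \cite[Chapter~VII; Theorem~2.9]{Conway1978}, which argues exactly as you do: Cauchy estimates give local equicontinuity, and Arzel\`a--Ascoli finishes, the latter being what your diagonal and $\varepsilon/3$ steps re-derive. You are also right that necessity depends on the paper's requirement that limits be finite-valued; under spherical normality, $\{f_n\equiv n\}$ would be normal without being locally bounded.
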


For a meromorphic function~$f:U\to\mbC$ on a plane domain~$U\subseteq\mbC$, its \emph{spherical derivative,~$f^\#$,} at~$z \in U$ is defined as
\begin{gather*}
	f^\#(z) := \frac{2|f'(z)|}{1+|f(z)|^2}
\inline{or}
	f^\#(z) := \lim_{\zeta \to z} \frac{2|f'(\zeta)|}{1+|f(\zeta)|^2}
\end{gather*}
depending on whether~$z$ is a pole of~$f$ or not.
This is a well-defined and continuous (real-valued) function on~$U$ \cite[Chapter~VII; Definition~3.7]{Conway1978},
and it facilitates another criterion for normality \cite[Theorem~3.8]{Conway1978}.

\begin{thmabc}[Marty's Criterion]
\label{thm:Marty}
	Let~$U\subseteq\mbC$ be a domain and let~$\mcF$ be a family of meromorphic functions on~$U$.
	Then,~$\mcF$ is normal on~$U$ if, and only if, the family of spherical derivatives~$\mcF^\# := \{f^\# \such f\in\mcF\}$ is locally bounded on~$U$.
\end{thmabc}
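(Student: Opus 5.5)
We prove both directions directly. For sufficiency we use Theorem~\ref{thm:Montel theorem spherical}, i.e.\ normality is equivalent to spherical equicontinuity. For necessity we use Weierstrass' theorem on locally uniform convergence of derivatives, together with the symmetry $z\mapsto 1/z$ of the sphere. Throughout, $\sigma$ is the length metric on $\CC$ with density $2|dw|/(1+|w|^2)$; this is exactly the normalisation matching the definition of $f^\#$.

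\emph{Sufficiency.} Assume $\mcF^\#$ is locally bounded on $U$, and fix $w\in U$. Choose a closed disk $\overline{D(w,r)}\subseteq U$ and a bound $M$ with $f^\#\le M$ on it for all $f\in\mcF$. Let $z\in D(w,r)$ and let $\gamma$ be the segment $[w,z]$. Then $f\circ\gamma$ is a path in $\CC$, and its spherical length is $\int_\gamma f^\#(\zeta)\,|d\zeta|$. This holds also when $\gamma$ crosses poles of $f$, since $f^\#$ is continuous and the formula is local; near a pole one may compute with $1/f$. Hence
\[
	\sigma\big(f(z),f(w)\big)\le M|z-w| \afterline{for all} f\in\mcF.
\]
Since the Euclidean and spherical metrics are comparable on compact subsets of $\mbC$, the family $\mcF$ is spherically equicontinuous at $w$. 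Theorem~\ref{thm:Montel theorem spherical} then gives that $\mcF$ is normal.

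\emph{Necessity.} Suppose $\mcF$ is normal but $\mcF^\#$ is not locally bounded. Then there are a compact $K\subseteq U$, maps $f_n\in\mcF$ and points $z_n\in K$ with $f_n^\#(z_n)\to+\infty$. Passing to a subsequence, we may assume $z_n\to z_0\in K$ and $f_n\to f$ locally uniformly in $\sigma$. Here $f$ is either meromorphic or the constant $\infty$. We derive a contradiction in two cases.

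\begin{enumerate}[(i)]
	\item Suppose $f(z_0)\neq\infty$. Pick a small disk $\Delta=D(z_0,\rho)$ on which $|f|\le C$. Uniform spherical convergence on $\overline\Delta$ forces $|f_n|\le C+1$ on $\Delta$ for large $n$. So these $f_n$ have no poles in $\Delta$ and converge uniformly in the Euclidean sense. By Weierstrass' theorem, $f_n'\to f'$ uniformly on $D(z_0,\rho/2)$. Consequently $f_n^\#\to f^\#$ uniformly there, and the $f_n^\#$ are bounded near $z_0$. This contradicts $f_n^\#(z_n)\to\infty$.
	\item Suppose $f(z_0)=\infty$; this includes the case $f\equiv\infty$. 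Apply case (i) to $g_n=1/f_n$. These converge spherically to $1/f$, because $w\mapsto1/w$ is a $\sigma$-isometry, and $1/f(z_0)=0\neq\infty$. A direct computation gives $(1/f)^\#=f^\#$, including at poles and zeros by continuity. The same contradiction follows.
\end{enumerate}

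The delicate points are the two places where poles interfere. In sufficiency this is the length formula along a path meeting poles; in necessity it is the possibility that the limit equals $\infty$ at $z_0$ or identically. Both are handled by the invariance of $\sigma$ and of the spherical derivative under $w\mapsto 1/w$, which reduces every local computation to the pole-free case.
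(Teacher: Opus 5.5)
Your proof is correct. Sufficiency follows from the path-length bound $\sigma(f(z),f(w))\le\int_{[w,z]}f^\#\,|d\zeta|\le M|z-w|$, which gives equicontinuity and then spherical Montel; necessity follows from Weierstrass convergence of derivatives near $z_0$, with the isometry $w\mapsto 1/w$ and $(1/f)^\#=f^\#$ covering limits that are infinite at $z_0$. The paper does not prove Marty's criterion itself but quotes it from Conway (Chapter~VII, Theorem~3.8), and your argument is essentially the standard textbook proof given there.
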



\subsection{Non-autonomous compositions}


Let~$A$ be any non-empty set.
We denote by~$\shift$ the \emph{left shift} on sequences, given by
\[
	\shift^k(a_1,a_2,a_3,\dots)=(a_{k+1},a_{k+2},a_{k+3},\dots)
\]
for any sequence of elements~$(a_n)_{n\geq1}\in A^\mbN$ and any~$k\in\mbN$.
For~$k=0$, we define~$\shift^0 = \id_{A^\mbN}$.

Let~$\seq{f} := (f_n)_{n\geq1} = (f_1,f_2,\dots) \in \mcH(\CC)^\mbN$ be a sequence of rational maps on~$\CC$.
For~$n\ge1$, we define the non-autonomous iterates
\[
	F^{(n)}_\seq{f} := f_n\circ\dots\circ f_1
	\inline{and also set}
	F^{(0)}_\seq{f} := \id_{\CC};
\]
these satisfy that~$F^{(n)}_{\shift\seq{f}}\circ f_1 = f_{n+1} \circ F^{(n)}_\seq{f}= F^{(n+1)}_\seq{f}$
along with the following \emph{cocycle property}:
\[
	F^{(n)}_{\shift^k\seq{f}}\circ F^{(k)}_\seq{f}=F^{(n+k)}_\seq{f} \afterline{for all} n,k\ge0.
\]

\begin{dfn}[The Fatou and Julia sets of a sequence]
	Let~$\seq{f}=(f_n)_{n\ge1}$ be a sequence of rational maps on~$\CC$, and consider the family~$\mcF_\seq{f} := \{F^{(n)}_\seq{f} \such n\ge1\}$.
	The maximal open set in~$\CC$ where~$\mcF_\seq{f}$ is normal we denote by~$\fat(\seq{f})$ and call it the \emph{Fatou set} of the sequence~$\seq{f}$.
	The \emph{Julia set} of the sequence~$\seq{f}$ is the complement of the Fatou set~$\jul(\seq{f}) := \CC \setminus \fat(\seq{f})$.
\end{dfn}

By definition,~$\fat(\seq{f})$ is open and~$\jul(\seq{f})$ is closed.

Sometimes we will refer to these sets as the \emph{non-autonomous} Julia and Fatou sets.
We call the ``classic'' Julia and Fatou sets produced by iterates of the same mapping as the \emph{autonomous} Julia and Fatou sets;
for these we may respectively write~``$\jul^{\mathrm{aut}}$'' and~``$\fat^{\mathrm{aut}}$''.


\subsection{Sequences of Monomials}
\label{sec:monomials definitions}


Consider a sequence of monomials~$\seq{p}=(p_n)_{n\ge1} = (p_1,p_2,p_3,\dots)$ in~$\CC$ with~$p_n(z)=a_n z^{d_n}$,~$a_n\in\mbC\setminus\{0\}$ and~$d_n\ge1$.
We introduce the following notation,
which will be used throughout this text:
\[
	D_n := \prod\nolimits_{j=1}^n d_j,
	\inline{and}
	A_n := \prod\nolimits_{i=1}^n a_i^{D_n/D_i}
	\inline{so that}
	F^{(n)}_{\seq{p}}(z)=A_n z^{D_n}.
\]
Further, we write~$S\subseteq\overline\mbR$ for the set of accumulation points of~$(s_n)_{n\geq1}$ in~$\overline{\mbR}$,
we set
\begin{equation}
\label{eq:s_n}
	s_n := \frac{1}{D_n}\log |A_n| = \sum\nolimits_{i=1}^n \frac{1}{D_i}\log|a_i|,
\end{equation}
It is worth noting that the set~$S$ cannot be empty as~$\overline{\mbR}$ is a compact set.

We might refer to a ``sequence of monomials'' as a \emph{monomial sequence} and to a ``sequence of polynomials'' as a \emph{polynomial sequence}.


\section{General invariance and non-emptiness}
\label{sec:basic_invariance}


Here, we present some general fundamental properties of the Julia and Fatou sets in the non-autonomous setting.

First, we present the equivalent of the invariance property for non-autonomous Fatou and Julia sets.
Versions of this are already known,
for example in \cite[Remark 3.1]{Sim2013}, or for bounded polynomial sequences in \cite[Theorem~1.1]{ComerfordSumi-ThickThin};
we provide our proof here for completeness.

\begin{thm}[Invariance of Fatou and Julia sets]
\label{thm:invariance deterministic}
	Let~$\seq{f} \in \mcH(\CC)^\mbN$ be a sequence of non-constant holomorphic mappings.
	Then, for the Julia set~$\jul(\seq{f})$ it holds that
	\begin{gather*}
		f_1(\jul(\seq{f})) = \jul(\shift \seq{f})
		\inline{and}
		f_1^{-1} \jul(\shift \seq{f}) = \jul(\seq{f}),
	\intertext{and consequently, for any~$n\in\mbN$}
		F^{(n)}_\seq{f}(\jul(\seq{f})) = \jul(\shift^n \seq{f})
		\inline{and}
		\big(F^{(n)}_\seq{f}\big)^{-1} \jul(\shift^n \seq{f}) = \jul(\seq{f}).
	\end{gather*}
	The same exact equalities also hold with the Julia set~$\jul(\seq{f})$ replaced by the Fatou set~$\fat(\seq{f})$.
\end{thm}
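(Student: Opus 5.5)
The plan is to prove the Fatou statement $f_1^{-1}\fat(\shift\seq{f}) = \fat(\seq{f})$ first, and then get everything else from it. Since $F^{(n+1)}_\seq{f} = F^{(n)}_{\shift\seq{f}}\circ f_1$, the family $\mcF_\seq{f}$ is, apart from the single map $f_1 = F^{(1)}_\seq{f}$, exactly the family $\{G\circ f_1 \such G\in\mcF_{\shift\seq{f}}\}$. Adding or removing one map does not affect normality. So it suffices to show that, for $z\in\CC$, the family $\{G\circ f_1\}$ is normal near $z$ if and only if $\mcF_{\shift\seq{f}}$ is normal near $f_1(z)$.

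For the easy direction, suppose $\mcF_{\shift\seq{f}}$ is normal on a neighbourhood $V$ of $f_1(z)$. Take a neighbourhood $U$ of $z$ with $f_1(U)\subseteq V$. By \cref{thm:Montel theorem spherical}, $\mcF_{\shift\seq{f}}$ is equicontinuous on $V$, and $f_1$ is continuous, so $\{G\circ f_1\}$ is equicontinuous on $U$. Hence $z\in\fat(\seq{f})$.

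The converse is the main obstacle, because $f_1$ may have a critical point at $z$. Suppose $\{G\circ f_1\}$ is normal on a neighbourhood $U$ of $z$. Since $f_1$ is non-constant it is an open map, so $V := f_1(U)$ is an open neighbourhood of $w := f_1(z)$. Take any sequence $G_k$ in $\mcF_{\shift\seq{f}}$ and pass to a subsequence with $G_k\circ f_1\to h$ locally uniformly on $U$. I would show that $(G_k)$ is spherically equicontinuous at each point $w'\in V$, which is enough by \cref{thm:Montel theorem spherical}. Choose $z'\in U$ with $f_1(z')=w'$. Near $z'$, in local coordinates, $f_1$ looks like $\zeta\mapsto\zeta^m$, and so it maps a small closed disk $\overline{B}$ around $z'$ onto a neighbourhood $W$ of $w'$. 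By the open mapping property, every point $w''$ sufficiently close to $w'$ has a preimage $z''\in B$ with $\sigma(z'',z')\to 0$ as $w''\to w'$; this is the continuity of the multivalued inverse, read off from the local normal form. Equicontinuity of $\{G_k\circ f_1\}$ at $z'$ then gives
\[
\sigma\big(G_k(w''),G_k(w')\big)=\sigma\big(G_k f_1(z''),G_k f_1(z')\big)<\varepsilon
\]
uniformly in $k$. So the original family $\mcF_{\shift\seq{f}}$ is equicontinuous on $V$, and we never needed the limit $h$ at all. This proves $f_1^{-1}\fat(\shift\seq{f})=\fat(\seq{f})$.

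Taking complements gives $f_1^{-1}\jul(\shift\seq{f})=\jul(\seq{f})$. Since a non-constant rational map is surjective, $f_1(f_1^{-1}X)=X$ for every $X$, which yields $f_1(\jul(\seq{f}))=\jul(\shift\seq{f})$ and likewise $f_1(\fat(\seq{f}))=\fat(\shift\seq{f})$. The statements for $F^{(n)}_\seq{f}$ follow by induction on $n$. Apply the one-step result to $\shift^k\seq{f}$, and use $F^{(n+1)}_\seq{f} = f_{n+1}\circ F^{(n)}_\seq{f}$ together with $(g\circ h)^{-1} = h^{-1}\circ g^{-1}$ on sets.
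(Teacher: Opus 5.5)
Your proposal is correct and follows essentially the same route as the paper. Both directions go through spherical equicontinuity (\cref{thm:Montel theorem spherical}); the hard direction uses the openness of $f_1$ to find, for points near $f_1(z_0)$, preimages near $z_0$ (the paper takes $D_\sigma(w_0,\delta')\subseteq f_1(U\cap D_\sigma(z_0,\delta_2))$ instead of the local normal form), and the $F^{(n)}_{\seq{f}}$ statements come from applying the one-step result to shifted sequences together with surjectivity — and, as you noted yourself, the passage to a convergent subsequence $G_k\circ f_1\to h$ is superfluous.
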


\begin{proof}
	It is enough to show that~$f_1^{-1} \fat(\shift \seq{f}) = \fat(\seq{f})$ holds,
	for any given a sequence~$\seq{f}=(f_1,f_2,\dots)$ where each~$f_n$ has degree at least~$1$,
	in which case each~$f_n$ is surjective.		
	Thus, if~$f_1^{-1} \fat(\shift \seq{f}) = \fat(\seq{f})$, then~$\fat(\shift \seq{f}) = f_1 \big( f_1^{-1} \fat(\shift \seq{f}) \big) = f_1(\fat(\seq{f}))$.
	
	Next, consider the sequence~$\seq{g} = \shift^{n-1} \seq{f}$ for~$n\in \mbN$.
	(The operator~$\shift^0$ is merely the identity.)
	If~$f_1^{-1} \fat(\shift \seq{f}) = \fat(\seq{f})$ is true for any~$\seq{f}\in\mcH(\CC)^\mbN$, it must also be true for~$\seq{g}$, that is,~$g_1^{-1}\fat(\shift \seq{g}) = \fat(\seq{g})$, in which case we obtain that 
	$f_n^{-1}\fat(\shift^n \seq{f}) = \fat(\shift^{n-1} \seq{f})$.
	Iterating this argument, we get~$\big(F^{(n)}_\seq{f}\big)^{-1} \fat(\shift^n \seq{f}) = f_1^{-1} \dots f_n^{-1} \fat(\shift^n \seq{f}) = \fat(\seq{f})$ for any~$n\in\mbN$.
	As above,~$\big(F^{(n)}_\seq{f}\big)^{-1} \fat(\shift^n \seq{f}) = \fat(\seq{f})$ implies that~$\fat(\shift^n \seq{f}) = F^{(n)}_\seq{f}(\fat(\seq{f}))$,
	since~$F^{(n)}_\seq{f}$ is surjective as a composition of surjective maps.
	
	Hence, showing~$f_1^{-1} \fat(\shift \seq{f}) = \fat(\seq{f})$ for all sequences~$\seq{f}\in\mcH(\CC)^\mbN$ of non-constant holomorphic mappings suffices to conclude all the requested properties for the Fatou set.
	We proceed to do this right below.
	The corresponding results for~$\jul(\seq{f})$ are true immediately as the Fatou and Julia sets are complementary, and the mappings~$f_n$ are all surjective.
	
	We start by showing the inclusion~$f_1^{-1}\fat(\shift \seq{f}) \subseteq \fat(\seq{f})$.
	Let~$z_0\in f_1^{-1}\fat(\shift \seq{f})$ so that~$f_1(z_0) \in \fat(\shift \seq{f})$.
	As the Fatou set is open, there is a neighbourhood~$U$ of~$f_1(z_0)$ such that the family
	\[\mcF_{\shift \seq{f}} = \{f_{n+1} \circ\dots\circ f_2 \such n\in\mbN\}\]
	is spherically equicontinuous at each point of~$U$.
	Now, consider the neighbourhood~$V := f_1^{-1}(U)$ of~$z_0$,
	---$V$ is open as~$f_1$ is continuous---
	and fix an~$\varepsilon>0$.
	Since~$f_1(z_0) \in U$ and~$\mcF_{\shift \seq{f}}$ is spherically equicontinuous on~$U$, there exists a~$\delta_1>0$ such that for all~$n \in \mbN$ and all~$w \in U$ with~$\sigma(w,f_1(z_0))<\delta_1$ it holds that
	\[
		\sigma(f_{n+1} \circ\dots\circ f_2(w), f_{n+1} \circ\dots\circ f_2(f_1(z_0)))<\varepsilon.
	\]
	However, by the continuity of~$f_1$ on~$V$ for every~$\delta>0$ there exists a~$\rho>0$ such that for all~$z \in V$ with~$\sigma(z,z_0)<\rho$ one has that~$\sigma(f_1(z),f_1(z_0))<\delta$.
	Therefore, since~$f_1(z)\in U$ when~$z \in V$, setting~$\delta = \delta_1$ and combining the two inequalities from above gives that there exists a~$\rho>0$ such that for all~$n \in \mbN$ and all~$z \in V$ with~$\sigma(z,z_0)<\rho$ it holds that
	\[
		\sigma(F^{(n+1)}_\seq{f}(z), F^{(n+1)}_\seq{f}(z_0))
		=
		\sigma(f_{n+1} \circ\dots\circ f_2(f_1(z)), f_{n+1} \circ\dots\circ f_2(f_1(z_0)))
		<\varepsilon;
	\]
	this is true for all~$\varepsilon>0$.
	In other words, the family~$\{F^{(n+1)}_\seq{f} \such n\in\mbN\}$ is spherically equicontinuous at~$z_0$.
	Consequently, the family~$\mcF_\seq{f} = \{F^{(n+1)}_\seq{f} \such n\in\mbN\} \cup \{f_1\}$	is also spherically equicontinuous at~$z_0$	and, by repeating the same argument with any other point of~$V$ in place of~$z_0$, at every point of~$V$.
	\cref{thm:Montel theorem spherical} implies, then, that~$\fat(\seq{f}) \supseteq V \ni z_0$, and in turn~$f_1^{-1}\fat(\shift \seq{f}) \subseteq \fat(\seq{f})$.
	
	Last, we prove the reverse inclusion~$\fat(\seq{f}) \subseteq f_1^{-1}\fat(\shift \seq{f})$.
	Let~$z_0\in \fat(\seq{f})$.
	Then, there exists a neighbourhood~$U$ of~$z_0$ where the family~$\mcF_\seq{f}$ is normal.
	Set~$V := f_1(U)$ and~$w_0 := f_1(z_0)$, and note that~$V$ is an open neighbourhood of~$w_0$ as~$f_1$ is an open mapping.
	Next, fix an~$\varepsilon>0$.
	By the normality of~$\mcF_\seq{f}$ on~$U$, there exists~$\delta_2>0$ such that, for all non-negative integers~$n$, for all~$z \in U$ with~$\sigma(z,z_0)<\delta_2$ it holds that
	\begin{equation}
	\label{eq:deterministic:normality at z around z_0}
		\sigma(F^{(n+1)}_\seq{f}(z),F^{(n+1)}_\seq{f}(z_0))
		=
		\sigma(
			f_{n+1} \circ\dots\circ f_1(z),
			f_{n+1} \circ\dots\circ f_1(z_0)
		)
		<\varepsilon.
	\end{equation}
	
	Now, the set~$f_1(U \cap D_\sigma(z_0,\delta_2))$ is also open and contains~$w_0$,
	hence one can find~$\delta'>0$ so that~$D_\sigma(w_0,\delta') \subseteq f_1(U \cap D_\sigma(z_0,\delta_2))$.
	With this in mind, for any~$w \in V$ with~$\sigma(w,w_0)<\delta'$ there exists a~$z\in U$ satisfying~$f_1(z)=w$ and~$\sigma(z,z_0)<\delta_2$,
	and in turn by \eqref{eq:deterministic:normality at z around z_0}
	\begin{align*}
		\sigma(F^{(n)}_{\shift \seq{f}}(w),F^{(n)}_{\shift \seq{f}}(w_0))
	&	=
		\sigma(
			f_{n+1} \circ\dots\circ f_2(w),
			f_{n+1} \circ\dots\circ f_2(w_0)
		)
	\\
	&	=
		\sigma(
			f_{n+1} \circ\dots\circ f_2(f_1(z)),
			f_{n+1} \circ\dots\circ f_2(f_1(z_0))
		)
	\\
	&	=
		\sigma(F^{(n+1)}_\seq{f}(z),F^{(n+1)}_\seq{f}(z_0))
		<\varepsilon
	\end{align*}
	for all~$n\in\mbN$.
	But then, the family~$\mcF_{\shift \seq{f}}=\{F^{(n)}_{\shift \seq{f}} \such n\in\mbN\}$ is spherically equicontinuous at~$w_0 = f_1(z_0)$, and therefore also at~$f_1(z)$ for any~$z \in U$ following the same reasoning.
	Hence,~$\fat(\shift \seq{f}) \supseteq f_1(U) \ni f_1(z_0)$,
	which implies~$z_0 \in f_1^{-1}(\fat(\shift \seq{f}))$,
	and in conclusion~$\fat(\seq{f}) \subseteq f_1^{-1}(\fat(\shift \seq{f}))$.
	
	Combining the two inclusions, we conclude that,~$\fat(\seq{f}) = f_1^{-1}(\fat(\shift \seq{f}))$.
\end{proof}

\begin{cor}
\label{cor:replacement_lemma}
	Let~$\seq{f}=(f_k)_{k\ge1}$ be a sequence of non-constant rational maps on~$\CC$ and~$g$ a non-constant rational map.
	Fix an~$n\in\mbN$,
	and define
	\[
		\seq{f}^{f_n \mapsto g} := (f_1,\dots,f_{n-1},g,f_{n+1},\dots).
	\]
	Then,
	\[
		\jul(\seq{f}^{f_n \mapsto g})
		= \big(F^{(n-1)}_\seq{f}\big)^{-1}
		g^{-1} \jul(\shift^n\seq{f})
		= \big(F^{(n-1)}_\seq{f}\big)^{-1} g^{-1} F^{(n)}_\seq{f}(\jul(\seq{f})).
	\]
	The same identity holds with~$\jul$ replaced by~$\fat$.
\end{cor}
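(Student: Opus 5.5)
The plan is to apply \cref{thm:invariance deterministic} twice, once to the modified sequence and once to the original one. Write $\seq{g}:=\seq{f}^{f_n\mapsto g}$, so that $g_k=f_k$ for $k\neq n$ and $g_n=g$. All entries of $\seq{g}$ are non-constant rational maps, so \cref{thm:invariance deterministic} applies to $\seq{g}$ as well as to $\seq{f}$.

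First I use the tail. Since $\shift^n\seq{g}=\shift^n\seq{f}$, the theorem applied to $\seq{g}$ with $n$ shifts gives
\[
	\jul(\seq{g})=\big(F^{(n)}_{\seq{g}}\big)^{-1}\jul(\shift^n\seq{g})=\big(F^{(n)}_{\seq{g}}\big)^{-1}\jul(\shift^n\seq{f}).
\]

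Next I factor the composition. By definition $F^{(n)}_{\seq{g}}=g\circ F^{(n-1)}_{\seq{g}}$. The first $n-1$ entries of $\seq{g}$ and $\seq{f}$ agree, so $F^{(n-1)}_{\seq{g}}=F^{(n-1)}_{\seq{f}}$. Taking preimages of a composition in reverse order, $(g\circ F^{(n-1)}_{\seq{f}})^{-1}(E)=(F^{(n-1)}_{\seq{f}})^{-1}(g^{-1}(E))$, which yields the first equality. For $n=1$ one uses $F^{(0)}_{\seq{f}}=\id_{\CC}$, and the identity reduces to $\jul(\seq{g})=g^{-1}\jul(\shift\seq{f})$, which is the $n=1$ case of the theorem.

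For the second equality I substitute $\jul(\shift^n\seq{f})=F^{(n)}_{\seq{f}}(\jul(\seq{f}))$, which is the forward-invariance part of \cref{thm:invariance deterministic} applied to $\seq{f}$. The Fatou version follows verbatim, since the theorem holds equally for $\fat$. Alternatively, one can take complements, because preimages commute with complements and $F^{(n)}_{\seq{f}}$ is surjective.

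There is no real obstacle here. The only step needing care is the bookkeeping that the shifted sequences coincide and that the first $n-1$ compositions agree.
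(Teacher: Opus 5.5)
Your proof is correct and is exactly the intended argument: the paper states this corollary without proof as an immediate consequence of \cref{thm:invariance deterministic}, using $\shift^n\seq{f}^{f_n\mapsto g}=\shift^n\seq{f}$ and $F^{(n)}_{\seq{f}^{f_n\mapsto g}}=g\circ F^{(n-1)}_{\seq{f}}$, just as you do. One small caveat on your optional complement argument: surjectivity alone does not make images commute with complements, so you also need $\jul(\seq{f})=\big(F^{(n)}_{\seq{f}}\big)^{-1}\jul(\shift^n\seq{f})$, which the theorem supplies; your main route via the Fatou half of the theorem avoids this issue entirely.
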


It is well-known (see, for example, \cite[Theorem~4.2.1]{Beardon1991} or \cite[Lemma~4.8]{Milnor2006}) in the autonomous case that for a rational map with degree at least~$2$ the resultant Julia set is non-empty.
Below, we present the corresponding result for the Julia set of a sequence of holomorphic mappings.

\begin{prop}
\label{prop:Julia is non empty}
	Let~$\seq{f}=(f_1,f_2,\dots)\in\mcH(\CC)^\mbN$ be a sequence of non-constant holomorphic mappings.
	If~$\seq{f}$ contains infinitely many mappings of degree at least~$2$,
	then the Julia set~$\jul(\seq{f})$ is non-empty.
\end{prop}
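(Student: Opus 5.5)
We argue by contradiction: suppose $\jul(\seq{f})=\emptyset$, so that $\fat(\seq{f})=\CC$ and the family $\mcF_\seq{f}=\{F^{(n)}_\seq{f}\such n\ge1\}$ is normal on the whole sphere. The degrees satisfy $\deg F^{(n)}_\seq{f}=\prod_{i\le n}\deg f_i$, and since infinitely many $f_i$ have degree at least $2$, we have $\deg F^{(n)}_\seq{f}\to+\infty$.

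By normality on $\CC$, some subsequence $F^{(n_k)}_\seq{f}$ converges uniformly on $\CC$, which is compact, with respect to $\sigma$, to a holomorphic mapping $G:\CC\to\CC$. Hence $G\in\mcH(\CC)$, so $G$ is either a rational map of some finite degree $d\ge1$ or a constant map.

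\textbf{Step 1: $G$ is not a non-constant rational map.} Suppose $G$ is non-constant of degree $d$. Choose a point $w\in\CC$ that is not a critical value of $G$ and does not lie in $G(\{\infty\})$, so that $w$ has exactly $d$ simple preimages in $\mbC$. By Hurwitz's theorem, applied in the chart $1/(\cdot - c)$ around $w$ if necessary, for all large $k$ each equation $F^{(n_k)}_\seq{f}(z)=w$ has exactly $d$ solutions near the preimages of $w$. Uniform spherical convergence on all of $\CC$ rules out other solutions: away from small disks around the preimages of $w$, the map $G$ stays a definite spherical distance from $w$, and for large $k$ so does $F^{(n_k)}_\seq{f}$. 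So $\deg F^{(n_k)}_\seq{f}=d$ for large $k$, contradicting $\deg F^{(n_k)}_\seq{f}\to\infty$.

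\textbf{Step 2: $G$ is not constant.} Suppose $G\equiv c$. Uniform convergence on $\CC$ means that, for large $k$, $F^{(n_k)}_\seq{f}(\CC)$ lies in a small spherical disk around $c$. But each $F^{(n_k)}_\seq{f}$ is a non-constant rational map and therefore surjective onto $\CC$. This is a contradiction, and it is the cleaner alternative. (Equivalently, the spherical derivative bound from Marty's criterion, \cref{thm:Marty}, would force the spherical area of the image counted with multiplicity, which is $\deg F^{(n)}_\seq{f}$ times the area of the sphere, to stay bounded.)

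The main obstacle is making Step 1 rigorous; an argument that bypasses it is preferable. One option is to use Step 2's surjectivity idea together with a degree count via spherical area. The pull-back area $\int_{\CC}(F^\#)^2\,dA$ equals $\deg F$ times the area of the sphere. If the family were normal on $\CC$, then by Marty's criterion, applied in two charts covering $\CC$ (one around $0$, one around $\infty$), the spherical derivatives would be uniformly bounded on the compact sphere. The areas would then be bounded, contradicting $\deg F^{(n)}_\seq{f}\to\infty$. I would present this area argument as the main proof, since it handles both the constant and non-constant limits at once.
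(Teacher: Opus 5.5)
Your proof is correct. Steps 1--2 are in substance the paper's argument. The paper also passes to a subsequence converging uniformly on $\CC$ to some $G\in\mcH(\CC)$, but then simply cites \cite[Theorem~2.8.2]{Beardon1991} for the fact that $\deg F^{(n_k)}_\seq{f}=\deg G$ eventually (degree $0$ included), which contradicts $\deg F^{(n_k)}_\seq{f}\to+\infty$. Your Hurwitz count plus the surjectivity argument is a proof of exactly that cited fact. Your sketch of Step 1 is already essentially complete, so it is not a real obstacle.

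The area argument you prefer is a genuinely different route. It never identifies a limit map; it shows directly that a family of rational maps normal on all of $\CC$ has bounded degree. Marty's criterion and compactness bound the spherical derivatives over the whole sphere, while the pulled-back spherical area has total mass $4\pi\deg F$. This treats constant and non-constant limits uniformly and gives a quantitative statement, at the price of using the area (degree) formula instead of a standard convergence lemma.

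One detail should be made explicit. With the paper's normalisation $f^\#=2|f'|/(1+|f|^2)$, one has $\int_{\mbC}(F^\#)^2\,dm_2=4\pi\deg F$ with Euclidean area in the domain, and bounds on $F^\#$ over compacta of $\mbC$ do not control this integral near $\infty$. So split into $\{|z|\le1\}$ and $\{|z|\ge1\}$. Write the second piece as $\int_{|u|\le1}\big((F\circ\iota)^\#(u)\big)^2\,dm_2(u)$ with $\iota(u)=1/u$; this is legitimate because $(F^\#)^2\,dm_2$ is intrinsically the pull-back of the spherical area form. Then apply Marty's criterion on each closed unit disc. That is what your ``two charts'' remark amounts to, and with it the argument is complete.
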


\begin{proof}
	Assume that~$\jul(\seq{f})=\emptyset$.
	Then, the family~$\mcF_\seq{f} = \{F^{(n)}_\seq{f} \such n\in\mbN\}$ is normal on~$\CC$,
	and therefore the sequence~$(F^{(n)}_\seq{f})_{n\in\mbN}$ has a subsequence~$(F^{(n_k)}_\seq{f})_{k\in\mbN}$,
	which converges spherically uniformly on the compact subsets of~$\CC$, and in particular on~$\CC$, to some holomorphic mapping~$G\in\mcH(\CC)$.
	Per \cite[Theorem 2.8.2]{Beardon1991},~$G$ is in fact a rational map of some (fixed) non-negative degree~$d \in \mbN$, and the functions~$F^{(n_k)}_\seq{f}$ eventually also have degree exactly~$d$.
	However,~$F^{(n_k)}_\seq{f} = f_{n_k} \circ\dots\circ f_1$ and therefore~$\deg F^{(n_k)}_\seq{f} = \deg f_{n_k} \dots \deg f_1$.
	If there are infinitely many maps~$f_i$ of degree at least~$2$,
	the above quantity tends to infinity as~$n_k\to+\infty$.
	This is a contradiction,
	and consequently~$\jul(\seq{f})$ cannot be empty.
\end{proof}


\section{Julia sets of monomial sequences}
\label{sec:Monomials}


\subsection{Precise characterisation of the Julia set}
\label{sec:classification}


In this section, we describe the non-autonomous dynamics of monomial sequences.
In this setting, the dynamics are purely radial, and the Julia set is determined by the accumulation points of the \emph{real} sequence~$s_n = \frac{1}{D_n} \log|A_n|$.

We begin with the behaviour at~$0$ and~$\infty$ before stating our main \cref{thm:monomial_julia_accumulation}.

\begin{lem}
\label{lem:monomial_zero_infty_fatou}
C	Consider a sequence of monomials~$\seq{p}=(p_n)_{n\ge1}$ with~$p_n(z)=a_n z^{d_n}$,~$a_n\in\mbC\setminus\{0\}$ and~$d_n\ge1$.
	Set
	\[
		D_n := \prod\nolimits_{i=1}^n d_i,
		\quad A_n := \prod\nolimits_{i=1}^n a_i^{D_n/D_i}
		\inline{and}
		s_n := \frac{1}{D_n} \log|A_n| = \sum\nolimits_{i=1}^n \frac{1}{D_i} \log|a_i|.
	\]
	Then,
	\[
		0\in\fat(\seq{p}) \iff \limsup\nolimits_{n\ge1}s_n < +\infty,
	\inline{and}
		\infty\in\fat(\seq{p}) \iff \liminf\nolimits_{n\ge1}s_n > -\infty.
	\]
\end{lem}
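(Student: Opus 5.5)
The plan is to work directly with the closed form $F^{(n)}_\seq{p}(z)=A_nz^{D_n}$, so that
\[
	|F^{(n)}_\seq{p}(z)| = \exp\big(D_n(\log|z| + s_n)\big).
\]
Everything will be read off from the sign of $\log|z|+s_n$ and the size of $D_n\ge1$. Note that $F^{(n)}_\seq{p}(0)=0$ for all $n$. We treat the statement at $0$ first and deduce the one at $\infty$ by conjugation with $z\mapsto 1/z$.

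\emph{($\Leftarrow$ at $0$).} Suppose $\limsup s_n<+\infty$. Then there are $M\in\mbR$ and $n_0$ with $s_n\le M$ for all $n\ge n_0$. On the disk $D(0,r)$ with $r=e^{-M-1}$ we have $\log|z|+s_n\le -1$ for $n\ge n_0$, so
\[
	|F^{(n)}_\seq{p}(z)|\le e^{-D_n}\le e^{-1}.
\]
Hence the family is uniformly bounded on $D(0,r)$ for $n\ge n_0$. The finitely many remaining maps are polynomials and therefore bounded there as well. By \cref{thm:Montel for holomorphic} the family is normal on $D(0,r)$, and so $0\in\fat(\seq{p})$.

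\emph{($\Rightarrow$ at $0$).} Suppose $\limsup s_n=+\infty$, and choose a subsequence with $s_{n_k}\to+\infty$. Given any neighbourhood $D(0,r)$, take a point $z\neq0$ in it. Then $\log|z|+s_{n_k}\to+\infty$, and since $D_{n_k}\ge1$ this gives $|F^{(n_k)}_\seq{p}(z)|\to\infty$. If the family were normal near $0$, some further subsequence would converge locally uniformly to a holomorphic map $G$. We would have $G(0)=0$, because every iterate fixes $0$, while $G(z)=\infty$ for every $z\neq0$ in the disk. This contradicts the continuity of $G$. Hence $0\in\jul(\seq{p})$.

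\emph{The case of $\infty$.} Conjugate by $\iota(z)=1/z$, which is a spherical isometry, so normality is preserved: $\infty\in\fat(\seq{p})$ if and only if $0\in\fat(\iota\circ F^{(n)}\circ\iota)$. Now
\[
	\iota\circ F^{(n)}_\seq{p}\circ\iota(z)=A_n^{-1}z^{D_n},
\]
which is the $n$-th iterate of the monomial sequence with coefficients $a_n^{-1}$. For that sequence the corresponding quantities are $-s_n$. Applying the case at $0$ gives $\infty\in\fat(\seq{p})$ if and only if $\limsup(-s_n)<+\infty$, that is, $\liminf s_n>-\infty$.

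The only mildly delicate step is the normality-failure argument in the ($\Rightarrow$) direction. It needs the limit $G$ to be a genuine continuous map into $\CC$, which is exactly what convergence in the spherical metric provides. The hypothesis "$d_n\ge2$ infinitely often" is not needed here.
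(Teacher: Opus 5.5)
Your proof is correct and follows the paper's structure. Sufficiency uses the same disk bound $|F^{(n)}_{\seq{p}}(z)|\le e^{-D_n}$ together with Montel's theorem, and the point $\infty$ is handled by the same conjugation by $z\mapsto 1/z$, which turns $s_n$ into $-s_n$. The only variation is in the necessity direction at $0$: the paper obtains the explicit bound $s_n<-\log|z_0|$ from equicontinuity and the chordal-metric inequality, whereas you argue by contradiction, since a limit map would have to equal $0$ at $0$ and $\infty$ on the punctured disk, and both arguments are valid.
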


\begin{proof}
	We first treat the point at~$0$.
	Assume~$\limsup_{n\geq1} s_n < +\infty$ and notice it is equivalent to~$\sup_{n\ge1}s_n < +\infty$, since~$s_n \in \mbR$ for all~$n\in\mbN$.
	Let us set
	\[
		M := \sup\{s_n \such n\geq1\} \inline{and} r := e^{-(M+1)}.
	\]
	Then, for every~$n\ge1$ and every~$|z|\le r$,
	\[
		|F^{(n)}_{\seq{p}}(z)|
		=|A_n|\,|z|^{D_n}
		= e^{D_n s_n} e^{D_n\log|z|}
		\le e^{D_n(M+\log r)}
		= e^{-D_n}
		\le e^{-1},
	\]
	and the family~$\{F^{(n)}_{\seq{p}} \such n\geq1\}~$ is (locally) bounded on~$D(0,r)$.
	Therefore, it is a normal family on this set thanks to \cref{thm:Montel for holomorphic};
	in particular~$0\in\fat(\seq{p})$.
	(If additionally~$d_n\geq 2$ for infinitely many~$n$,
	then~$D_n\to+\infty$ and~$F^{(n)}_{\seq{p}}\to 0$ locally uniformly on~$D(0,r)$.)
	
	Conversely, assume~$0\in\fat(\seq{p})$.
	Then,~$\{F^{(n)}_{\seq{p}}\}_{n\ge1}$ is normal in a neighbourhood of~$0$ hence equicontinuous there by \cref{thm:Montel theorem spherical}.
	Take an~$\varepsilon\in(0,\sqrt2)$.
	Since~$F^{(n)}_{\seq{p}}(0)=0$ for all~$n$,
	by the equicontinuity there exists~$\delta>0$ such that
	\[
		\sigma(z,0)<\delta
		\implies
		\sigma\big(F^{(n)}_{\seq{p}}(z),0\big) < \varepsilon
		\afterline{for all} n\ge1.
	\]
	Fix any such~$z_0 \in D_\sigma(0,\delta) \setminus \{0\}$.
	Then, for all~$n\geq1$, \eqref{eq:chordal less than spherical} gives
	\[
		\frac{2|F^{(n)}_{\seq{p}}(z_0)|}{\sqrt{1+|F^{(n)}_{\seq{p}}(z_0)|^2}} < \varepsilon
		\quad\iff\quad
		|A_n||z_0|^{D_n} = |F^{(n)}_{\seq{p}}(z_0)| < \frac{\varepsilon}{\sqrt{4-\varepsilon^2}} =: \alpha < 1
	\]
	so that
	\[
		s_n
		= \frac{1}{D_n} \log|A_n|
		< \frac{1}{D_n} \log\alpha - \log|z_0|
		< -\log |z_0|.
	\]
	Hence,~$\sup_{n\ge1}s_n$, thus also~$\limsup_{n\geq1} s_n$, is finite.
	
	The statement for~$\infty$ can be treated similarly,
	but we present here a different argument utilising the previous result.
	Consider the M{\"o}bius transformation~$\mu(z) = z^{-1}$ and a new sequence of monomials~$\tilde{\seq{p}} = (\tilde{p}_1,\tilde{p}_2,\dots)$ defined by
	\[
		\tilde p_n(w)=\mu^{-1} \circ p_n \circ \mu (w) = a_n^{-1} w^{d_n}.
	\]
	The sequence~$\tilde{s}_n$ for~$\tilde{\seq{p}}$ corresponding to~$s_n$ is, then, given by
	\[\tilde{s}_n = \frac{1}{D_n} \log |\prod\nolimits_{i=1}^n (a_i^{-1})^{D_n/D_i}| = -s_n,\]
	and from the first statement, we get that
	\[
		0\in\fat(\tilde{\seq{p}})
		\quad\iff\quad
		\limsup\nolimits_{n\ge1}(-s_n) < +\infty
		\quad\iff\quad
		\liminf\nolimits_{n\ge1}s_n > -\infty.
	\]
	However, notice that~$\fat(\tilde{\seq{p}}) = \mu^{-1} \fat(\seq{p})$,
	thus~$\infty\in\fat(\seq{p})$ if, and only if,~$\liminf_{n\geq1} s_n > -\infty$.
\end{proof}

Notice that~$\limsup$ and~$\liminf$ can be readily replaced by~$\sup$ and~$\inf$.

Observe that \cref{lem:monomial_zero_infty_fatou} does not require the degrees of the monomials to be at least 2 for infinitely many of them, as opposed to \cref{prop:Julia is non empty}.
Yet, the Julia set is guaranteed to be non-empty whenever~$\limsup_{n\ge1} s_n = +\infty$ or~$\liminf_{n\ge1} s_n = -\infty$.

\monomialjulia

\begin{proof}
	We first prove the inclusion~$\bigcup_{s\in S}\{z\in\CC \such |z|=e^{-s}\} \subseteq \jul(\seq{p})$.
	If~$S\cap\mbR$ is non-empty,
	let~$s\in S\cap\mbR$.
	Choose a subsequence of~$(s_n)$ such that~$s_{n_k}\to s$ as~$n_k\to+\infty$,
	and fix a~$\theta\in[0,2\pi)$.
	We set
	\[
		g_k := F^{(n_k)}_{\seq{p}},
	\quad
		z := e^{-s} e^{i\theta},
	\inline{and}
		z_k := e^{-s_{n_k}}e^{i\theta}.
	\]
	Immediately, we have that~$z_k\to z$ and
	\[
		|g_k(z_k)|
		= |A_{n_k}| \, |z_k|^{D_{n_k}}
		= e^{D_{n_k}s_{n_k}} \, (e^{-s_{n_k}})^{D_{n_k}}
		= 1,
	\]
	and in turn we have
	\[
		|g_k'(z_k)|
		= D_{n_k}|A_{n_k}|\,|z_k|^{D_{n_k}-1}
		= D_{n_k} / |z_k|.
	\]
	Since~$D_n\to+\infty$, as~$z_k \to z$ we have that~$|g_k'(z_k)|\to+\infty$,
	and therefore,
	\[
		g_k^\#(z_k)
		= \frac{2|g_k'(z_k)|}{1+|g_k(z_k)|^2}
		= |g_k'(z_k)|
		\to +\infty.
	\]
	By \cref{thm:Marty}, the family is not normal at~$z$.
	Since the choice of~$\theta$ was arbitrary, every point of the circle~$\{|z|=e^{-s}\}$ lies in the Julia set~$\jul(\seq{p})$.
	If~$+\infty \in S$, then~$\limsup_{n\to+\infty} s_n = +\infty$,
	and \cref{lem:monomial_zero_infty_fatou} implies that~$0\in\jul(\seq{p})$.
	Likewise, if~$-\infty \in S$, then~$\liminf_{n\to+\infty} s_n = -\infty$,
	and again \cref{lem:monomial_zero_infty_fatou} implies~$\infty\in\jul(\seq{p})$.
	
	We now proceed to prove the reverse inclusion~$\jul(\seq{p}) \subseteq \bigcup_{s\in S}\{z\in\CC \such |z|=e^{-s}\}$.
	If~$\jul(\seq{p})\setminus\{0,\infty\}$ is non-empty,
	let us consider a point~$z_0\in\jul(\seq{p})\setminus\{0,\infty\}$ and set~$t := -\log|z_0|$.
	We claim that~$t\in S$.
	Assume not.
	Then, for some sufficiently small~$\eta>0$ there exists an~$N\in\mbN$ such that~$|s_n-t|\ge 2\eta$ for all~$n \geq N$.
	We set
	\[
		r := |z_0|e^{-\eta},
		\qquad
		R := |z_0|e^\eta,
		\inline{and}
		A := \{z\in\mbC \such r<|z|<R\},
	\]
	from which we have
	\[
		-\log|z| \in (t-\eta,t+\eta) \afterline{for all} z \in A.
	\]
	Note that~$z_0 \in A$ as well.
	We subsequently get the following two estimates:
	For any~$s_n$ with~$s_n\ge t+2\eta$ and for every~$z\in A$,
	\[
		s_n+\log|z|
		= s_n-(-\log|z|)
		\ge (t+2\eta)-(t+\eta)=\eta
	\implies
		|F^{(n)}_{\seq{p}}(z)|
		= e^{D_n(s_n+\log|z|)}
		\ge e^{\eta D_n};
	\]
	similarly, for any~$s_n$ with~$s_n\le t-2\eta$ and for every~$z\in A$,
	\[
		s_n+\log|z|
		\le (t-2\eta)-(t-\eta)
		= -\eta
	\implies
		|F^{(n)}_{\seq{p}}(z)|
		\le e^{-\eta D_n}.
	\]
	However, given any subsequence~$s_{n_k}$ of~$s_n$,
	there must be infinitely many indices~$n_k \geq N$ for which~$s_{n_k}$ satisfies at least one of the inequalities~$s_{n_k} \geq t+2\eta$ and~$s_{n_k} \leq t-2\eta$.
	Therefore, there is always a further subsequence ---again denoted by~$s_{n_k}$--- such that
	\[
		\text{either}\ F^{(n_k)}_{\seq{p}}\to\infty \inmath{uniformly on} A
		\quad\text{or}\ F^{(n_k)}_{\seq{p}}\to0 \inmath{uniformly on} A,
	\]
	since~$D_{n_k}\to+\infty$ as~$n_k\to+\infty$.
	Hence, the family~$\{F^{(n)}_{\seq{p}}\}_{n\ge1}$ is normal on~$A$.
	But this contradicts the fact that~$z_0\in\jul(\seq{p})$,
	and therefore~$t\in S$ and eventually
	\[
		\jul(\seq{p})\setminus\{0,\infty\}
		\subseteq
		\bigcup\nolimits_{s\in S}\{z\in\CC \such |z|=e^{-s}\}.
	\]
	Finally, if~$0\in\jul(\seq{p})$,
	\cref{lem:monomial_zero_infty_fatou} gives us that~$\limsup_{n\ge1}s_n=+\infty$;
	thus~$+\infty\in S$ and~$0\in\bigcup_{s\in S}\{z\in\CC \such |z|=e^{-s}\}$.
	Likewise, if~$\infty\in\jul(\seq{p})$,~$\liminf_{n\ge1}s_n=-\infty\in S$, and~$\infty\in\bigcup_{s\in S}\{z\in\CC \such |z|=e^{-s}\}$.
\end{proof}

\begin{cor}
\label{cor:monomial_isolated_points}
	Under the assumptions of \cref{thm:monomial_julia_accumulation},~$\jul(\seq{p})\setminus\{0,\infty\}$ is a (possibly empty) union of circles centred at~$0$,
	and every point of~$\jul(\seq{p})\setminus\{0,\infty\}$ is non-isolated.
	Hence, the only possible isolated points of~$\jul(\seq{p})$ are~$0$ and/or~$\infty$.
	Moreover,
	\begin{enumerate}
		\item $0$ is isolated in~$\jul(\seq{p})$ if, and only if,~$+\infty \in S$ and~$+\infty$ is an isolated point of~$S$;
		\item $\infty$ is isolated in~$\jul(\seq{p})$ if, and only if,~$-\infty\in S$ and~$-\infty$ is an isolated point of~$S$.
	\end{enumerate}
\end{cor}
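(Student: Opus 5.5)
The plan is to read everything off \cref{thm:monomial_julia_accumulation}, using the map $\phi:\overline{\mbR}\to[0,+\infty]$, $\phi(s)=e^{-s}$. This $\phi$ is an order-reversing homeomorphism onto $[0,+\infty]$, with $\phi(+\infty)=0$ and $\phi(-\infty)=+\infty$. Consider also the radial map $\rho:\CC\to[0,+\infty]$, $\rho(z)=|z|$, which is continuous. The theorem then says exactly that
\[
\jul(\seq{p})=\rho^{-1}(\phi(S)).
\]

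For the first assertion, take $z\in\jul(\seq{p})\setminus\{0,\infty\}$. Then $|z|=e^{-s}$ for some $s\in S\cap\mbR$, and the whole circle $C(0,e^{-s})$ lies in $\jul(\seq{p})$. Hence $\jul(\seq{p})\setminus\{0,\infty\}$ is a union of centred circles of positive finite radius. Since $z$ lies on a non-degenerate circle contained in the Julia set, points $ze^{i\theta}$ with $\theta\to0$ accumulate at $z$. So $z$ is not isolated, and the only candidates for isolated points are $0$ and $\infty$.

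For (1), suppose first that $0$ is isolated in $\jul(\seq{p})$. Then $0\in\jul(\seq{p})$, so by the theorem (or \cref{lem:monomial_zero_infty_fatou}) we have $+\infty\in S$. If $+\infty$ were not isolated in $S$, there would be $s_k\in S$ with $s_k\ne+\infty$ and $s_k\to+\infty$, so eventually $s_k\in\mbR$. The circles $\{|z|=e^{-s_k}\}\subseteq\jul(\seq{p})$ would then have radii tending to $0$, giving points of $\jul(\seq{p})\setminus\{0\}$ converging to $0$. That contradicts isolation.

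Conversely, suppose $+\infty\in S$ is isolated in $S$. Then there is $M\in\mbR$ with $S\cap(M,+\infty)=\emptyset$. So every point of $\jul(\seq{p})\setminus\{0\}$ satisfies $|z|\ge e^{-M}$ (including $\infty$, with $|\infty|=+\infty$). Hence the disk $D(0,e^{-M})$ meets $\jul(\seq{p})$ only in $0$, and $0\in\jul(\seq{p})$ because $+\infty\in S$.

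Item (2) follows either by the identical argument with large radii, or more cleanly by conjugating with $\mu(z)=z^{-1}$ as in the proof of \cref{lem:monomial_zero_infty_fatou}. Under this conjugation $\tilde s_n=-s_n$, so $\tilde S=-S$, and $\jul(\tilde{\seq{p}})=\mu^{-1}\jul(\seq{p})$ because $\mu$ is a homeomorphism of $\CC$. Isolation of $\infty$ in $\jul(\seq{p})$ therefore corresponds to isolation of $0$ in $\jul(\tilde{\seq{p}})$, and (1) applies.

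There is no real obstacle. The only points needing care are the conventions at $\pm\infty$: distinguishing real accumulation points from the infinite ones in the neighbourhood argument, and noting that the hypothesis on the degrees is inherited by $\tilde{\seq{p}}$, whose degrees are the same $d_n$.
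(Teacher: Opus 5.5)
Your proof is correct and follows the same route as the paper. Everything is read off from \cref{thm:monomial_julia_accumulation}: full circles give non-isolation, and isolation of $0$ (resp.\ $\infty$) amounts to there being no Julia circles of radii $e^{-s_j}\to 0$ (resp.\ $\to+\infty$); you supply the details the paper leaves implicit, and for item (2), where the paper only says ``analogous'', your optional $\mu(z)=z^{-1}$ conjugation is the same device the paper uses in \cref{lem:monomial_zero_infty_fatou}.
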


\begin{proof}
	If~$z\in\jul(\seq{p})\setminus\{0,\infty\}$,
	then by \cref{thm:monomial_julia_accumulation} the entire circle~$\{w\in\mbC \such|w|=|z|\}$ is contained in~$\jul(\seq{p})$,
	and so~$z$ is not isolated.
	Further,~$0\in\jul(\seq{p})$ is isolated exactly when there is no sequence of circles~$C(0,e^{-s_j})$ contained in~$\jul(\seq{p})$ with~$s_j \to +\infty$,
	i.e.\ when~$+\infty$ is isolated in~$S$.
	The statement for~$\infty$ is analogous.
\end{proof}

\begin{prop}[The eventually linear case]
\label{prop:monomial_eventually_linear}
	Let~$\seq{p}=(p_n)_{n\ge1}$ be a sequence of monomials with~$p_n(z)=a_n z^{d_n}$,~$a_n\in\mbC\setminus\{0\}$ and~$d_n\ge1$,
	and assume that~$d_n=1$ for all~$n\ge n_0$ for some~$n_0\in\mbN$.
	Then,~$\jul(\seq{p})\subseteq\{0,\infty\}$.
	In particular,
	\[
		0\in\jul(\seq{p}) \iff \limsup\nolimits_{n\geq1}|A_n| = +\infty
		\inline{and}
		\infty\in\jul(\seq{p}) \iff \liminf\nolimits_{n\geq1}|A_n| = 0.
	\]
\end{prop}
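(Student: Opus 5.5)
The plan is to reduce the claim to the tail of linear maps, using \cref{thm:invariance deterministic}, and then to read off the behaviour at $0$ and $\infty$ from \cref{lem:monomial_zero_infty_fatou}.

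\textbf{Reduction to the linear tail.} Set $\seq{q} := \shift^{n_0-1}\seq{p}$. Every map of $\seq{q}$ is linear, $q_k(z) = a_{n_0-1+k}\,z$. Hence $F^{(k)}_{\seq{q}}(z) = B_k z$ with $B_k = \prod_{i=n_0}^{n_0-1+k} a_i$. By \cref{thm:invariance deterministic},
\[
	\jul(\seq{p}) = \big(F^{(n_0-1)}_{\seq{p}}\big)^{-1}\jul(\seq{q}).
\]
The map $F^{(n_0-1)}_{\seq{p}}(z) = A_{n_0-1}z^{D_{n_0-1}}$ sends $\{0,\infty\}$ to itself, and it sends $\CC\setminus\{0,\infty\}$ to itself. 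So it suffices to show $\jul(\seq{q})\subseteq\{0,\infty\}$.

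\textbf{The linear tail has Julia set in $\{0,\infty\}$.} Fix a compact set $K\subseteq\mbC\setminus\{0\}$ contained in a small round disk $D(z_0,\rho)$ with $|z_0|>2\rho$. On $K$ the maps $z\mapsto B_k z$ are Möbius. I plan to argue equicontinuity directly in logarithmic coordinates. On a simply connected neighbourhood of $z_0$, write $z = e^{w}$, so that $B_k z = e^{w+\log B_k}$. Then $\log|B_k z| = \log|B_k| + \operatorname{Re} w$.

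A subsequence of $(\log|B_k|)$ either tends to $\pm\infty$ or converges to a finite limit. In the finite case, the maps $B_k z$ converge locally uniformly along a further subsequence, using compactness of the arguments. In the case $+\infty$ they tend to $\infty$ uniformly on $K$, and in the case $-\infty$ they tend to $0$ uniformly on $K$. Hence $\{F^{(k)}_{\seq{q}}\}$ is normal on $\mbC\setminus\{0\}$.

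A simpler alternative is to apply \cref{thm:Marty}. The spherical derivative is
\[
	\frac{2|B_k|}{1+|B_k|^2|z|^2} \le \frac{1}{|z|},
\]
using $2ab\le 1+a^2b^2$ with $a=|B_k|$ and $b=|z|$, so that $\frac{2|B_k|}{1+|B_k|^2|z|^2} \le \frac{1}{|z|}$. This bound is locally uniform on $\mbC\setminus\{0\}$. Either argument gives $\jul(\seq{q})\subseteq\{0,\infty\}$, and hence $\jul(\seq{p})\subseteq\{0,\infty\}$.

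\textbf{The characterisations at $0$ and $\infty$.} \Cref{lem:monomial_zero_infty_fatou} does not need $d_n\ge2$. It gives $0\in\jul(\seq{p})$ if and only if $\sup_n s_n=+\infty$. For $n\ge n_0$ we have $D_n = D_{n_0}$, a fixed constant, so $s_n = \log|A_n|/D_{n_0}$. Therefore $\sup_n s_n = +\infty$ if and only if $\limsup_n|A_n|=+\infty$, where the finitely many terms with $n<n_0$ are irrelevant. In the same way, $\infty\in\jul(\seq{p})$ if and only if $\inf_n s_n = -\infty$, which holds if and only if $\liminf_n |A_n| = 0$.

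The only point needing care is the normality on $\mbC\setminus\{0\}$ in the second step, and the Marty bound handles it cleanly.
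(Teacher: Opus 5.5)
Your proof is correct. Its skeleton matches the paper's: first show normality of the iterates on $\CC\setminus\{0,\infty\}$, then read off the behaviour at $0$ and $\infty$ from \cref{lem:monomial_zero_infty_fatou}, using $s_n=\frac{1}{D_{n_0}}\log|A_n|$ for $n\ge n_0$. That last part is identical to the paper's.

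The difference lies in the normality step. The paper never passes to the linear tail. It works directly with $F^{(n)}_{\seq{p}}(z)=A_nz^{D_{n_0}}$ for $n\ge n_0$. For a compact $K$ with $m\le|z|\le M$, it extracts from any subsequence a further one with $A_{n_k}\to A\in\CC$, and obtains uniform convergence either to $Az^{D_{n_0}}$ or to $\infty$. This is essentially your first sketch. The logarithmic coordinates there add nothing, since $|B_kz|=|B_k||z|$.

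Your preferred route via \cref{thm:Marty} is a genuinely different and shorter argument. However, the detour through \cref{thm:invariance deterministic} is not needed for it. The same inequality $2ab\le 1+a^2b^2$ gives
\[
	\big(A_nz^{D}\big)^\# = \frac{2D|A_n||z|^{D-1}}{1+|A_n|^2|z|^{2D}} \le \frac{D}{|z|}, \qquad D=D_{n_0},
\]
so Marty's criterion applies to the original family on $\mbC\setminus\{0\}$ in one line. This version also shows why the eventually linear case is special: for a general monomial sequence the same bound reads $D_n/|z|$, which is useless once $D_n\to+\infty$.

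The paper's subsequence argument, for its part, is fully elementary and identifies the possible limit functions explicitly.
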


We note that in this case the Julia set may very well be empty.

\begin{proof}
	By the assumptions,~$D_n = D_{n_0}$ for all~$n \geq n_0$ so that~$F^{(n)}_{\seq{p}}(z)=A_n z^{D_{n_0}}$ for all~$n\ge n_0$.
		
	We first show that~$\jul(\seq{p})\subseteq\{0,\infty\}$.
	Let~$K\subseteq\CC\setminus\{0,\infty\}$ be compact.
	Then, there exist constants~$0 < m \leq M < +\infty$ such that~$m \leq |z| \leq M$ for all~$z\in K$.
	Consider an arbitrary subsequence~$\big(F^{(n_k)}_{\seq{p}}\big)$ of~$\big(F_\seq{p}^{(n)})$;
	we may assume that~$n_k \geq n_0$ for all~$k\geq1$.
	Since~$\CC$ is compact, the sequence~$(A_{n_k})_{k \geq 1}$ has a convergent subsequence in~$\CC$
	(again denoted by~$A_{n_k}$) such that~$A_{n_k}\to A$ for some~$A\in\CC$.
	If~$A\in\mbC$, then
	\[
		\sup\nolimits_{z\in K}\big|F^{(n_k)}_{\seq{p}}(z)-Az^{D_{n_0}}\big|
		\leq |A_{n_k}-A|\,M^{D_{n_0}}
		\to 0
		\afterline{as} n_k\to+\infty;
	\]
	if~$A=\infty$, then
	\[
		|F^{(n_k)}_{\seq{p}}(z)|
		= |A_{n_k}|\,|z|^{D_{n_0}}
		\geq |A_{n_k}|\,m^{D_{n_0}}
		\to +\infty
		\afterline{as} n_k\to+\infty
	\]
	uniformly on~$K$.
	Thus, the family is normal on~$\CC\setminus\{0,\infty\}$,
	i.e.~$\jul(\seq{p})\subseteq\{0,\infty\}$.
	
	Now, notice that
	\[s_n = \frac{1}{D_{n_0}} \log|A_n| \afterline{for all} n \geq n_0.\]
	The rest of the claims follow directly from \cref{lem:monomial_zero_infty_fatou}.
\end{proof}


\subsection{Orbital description of a Julia set of a monomial sequence}
\label{sec:orbital description}


In the autonomous setting, the Julia set of a holomorphic mapping equals the closure of the (full) backward orbit of any of its points.
In the non-autonomous setting, we can in fact obtain a similar dynamical description for the Julia set of a sequence of monomials.

\begin{thm}
\label{thm:derived set is julia}
	Consider a sequence of monomials~$\seq{p}=(p_n)_{n\ge1}$ with~$p_n(z)=a_n z^{d_n}$,~$a_n\in\mbC\setminus\{0\}$ and~$d_n\ge1$,
	and assume that~$d_n\ge2$ for infinitely many~$n$.
	Set~$D_n := \prod_{i=1}^n d_i$,
	and let~$\seq{w} = (w_1, w_2, \dots)$ be a sequence of points in~$\CC\setminus \{0,\infty\}$ such that 
	\[
		\lim_{n\to+\infty} \frac{1}{D_n} \log{|w_n|} = 0.
	\]
	Consider the set
	\[
		E(\seq{w})
		:= \bigcup\nolimits_{n=1}^{+\infty} \big(F^{(n)}_{\seq{p}}\big)^{-1}\{w_n\},
	\]
	and let~$E'(\seq{w})$ be the derived set of~$E(\seq{w})$.
	It holds that 
	\[
		\jul(\seq{p}) = E'(\seq{w}).
	\]
	
	In particular, for a point~$w\in\CC\setminus\{0,\infty\}$ it holds that~$\jul(\seq{p}) = E'(w)$.
\end{thm}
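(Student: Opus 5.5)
By \cref{thm:monomial_julia_accumulation}, it suffices to show that
\[
	E'(\seq{w}) = \bigcup\nolimits_{s\in S}\{z\in\CC \such |z|=e^{-s}\}.
\]
The first step is to write down the level sets explicitly. Since $F^{(n)}_{\seq{p}}(z)=A_nz^{D_n}$ and $w_n\neq 0,\infty$, the set $(F^{(n)}_{\seq{p}})^{-1}\{w_n\}$ consists of exactly $D_n$ points. These are equally spaced in argument on the circle $\{|z|=r_n\}$, where
\[
	r_n := \Big(\tfrac{|w_n|}{|A_n|}\Big)^{1/D_n} = e^{u_n - s_n},
	\qquad u_n := \tfrac{1}{D_n}\log|w_n| \to 0 .
\]
So $-\log r_n = s_n - u_n$ has exactly the same accumulation points in $\overline{\mbR}$ as $(s_n)$, namely $S$. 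Also, the angular gap between consecutive points of level $n$ is $2\pi/D_n$, and this tends to $0$ because $d_n\ge2$ infinitely often.

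Next comes the inclusion $E'(\seq{w})\subseteq\bigcup_{s\in S}\{|z|=e^{-s}\}$. Let $z\in E'(\seq{w})$ and take distinct points $z_k\in E(\seq{w})\setminus\{z\}$ with $z_k\to z$ spherically. Each level set is finite, so the levels $n_k$ with $z_k\in(F^{(n_k)}_{\seq{p}})^{-1}\{w_{n_k}\}$ must be unbounded; passing to a subsequence, $n_k\to+\infty$. Then $|z_k|=r_{n_k}$, so $-\log|z|=\lim(s_{n_k}-u_{n_k})=\lim s_{n_k}$ in $\overline{\mbR}$, which lies in $S$. This also covers $z=0$ and $z=\infty$ via the conventions $e^{\mp\infty}$.

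Then I prove the converse inclusion. Take $s\in S$ and $s_{n_k}\to s$.
\begin{itemize}
	\item If $s\in\mbR$, fix $z$ with $|z|=e^{-s}$. Then $r_{n_k}\to e^{-s}$, and on the circle of radius $r_{n_k}$ there are points of level $n_k$ whose argument is within $2\pi/D_{n_k}$ of $\arg z$. Among the two nearest such points, at least one differs from $z$. These chosen points converge to $z$ and are different from $z$, so $z\in E'(\seq{w})$.
	\item If $s=+\infty$, then $r_{n_k}\to0$. Any choice of level-$n_k$ preimages gives nonzero points converging to $0$, so $0\in E'(\seq{w})$.
	\item If $s=-\infty$, then $r_{n_k}\to+\infty$ and the same argument gives $\infty\in E'(\seq{w})$, using that $\sigma(z,\infty)\le\pi/\sqrt{1+|z|^2}$.
\end{itemize}
Euclidean convergence implies spherical convergence by the inequality recalled in \cref{sec:prelim and notations}, so the finite cases pose no issue.

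The only delicate point is the bookkeeping in the definition of the derived set: the approximating points must differ from the limit, and the level sets must be finite. This is why I pass to levels $n_k\to\infty$ in the first inclusion and choose the approximating point carefully in the second. The particular case $\seq{w}\equiv w$ follows because $\frac1{D_n}\log|w|\to0$ when $D_n\to+\infty$.
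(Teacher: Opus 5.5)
Your proposal is correct and follows essentially the same route as the paper's proof. Both arguments reduce via \cref{thm:monomial_julia_accumulation} to the circle description and write each level set as $D_n$ equally spaced points on the circle of radius $e^{-s_n+\frac{1}{D_n}\log|w_n|}$. For $E'(\seq{w})\subseteq\bigcup_{s\in S}\{|z|=e^{-s}\}$, both use finiteness of the level sets to force $n_k\to+\infty$; for the reverse inclusion, both pick angularly nearest preimages while avoiding $z$ itself. Your ``one of the two nearest points differs from $z$'' device is a slightly tidier version of the paper's case split on whether $z_{n_k,j_k}=z_0$ eventually.
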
 

\begin{proof}
	Using \cref{thm:monomial_julia_accumulation}, it is sufficient to show that~$E'(\seq{w}) = \bigcup_{s\in S} \{z\in\CC \such |z| = e^{-s}\}$.
	
	First note that, for any fixed~$n\in\mbN$,~$\big(F^{(n)}_{\seq{p}}\big)^{-1}\{w_n\}$ is a collection of~$D_n$ many equidistant points on the circle~$\{|z| = \rho_n\}$ where 
	\[
		\rho_n
		= |w_n / A_n|^{1/D_n} 
		= e^{-s_n + \frac{1}{D_n} \log|w_n|};
	\]
	specifically, if~$\theta_n\in[0,2\pi)$ is the principal argument of~$\frac{w_n}{A_n}$,
	\[
		\big(F^{(n)}_{\seq{p}}\big)^{-1}\{w_n\}
		=
		\{ z_{n,k} := \rho_n e^{i(\theta_n + 2\pi k)/D_n} \such 0 \le k \le D_n - 1 \}.
	\]
	
	Now, let~$s\in S$.
	Pick a subsequence~$(s_{n_k})$ of~$(s_n)$ with~$s_{n_k}\to s$ as~$k\to+\infty$,
	and consider the (sub)sequence~$(\rho_{n_k})_{k\ge 1}$.
	Then,~$\rho_{n_k} \to e^{-s}$ as~$k\to+\infty$, since~$\lim_{n\to+\infty} \frac{1}{D_n} \log|w_n| = 0$.
	If~$s \in \mbR$,
	take any point on the circle~$\{|z| = e^{-s}\} \subseteq \jul(\seq{p})$, say~$z_0 = e^{-s} e^{i\theta}$ for some~$\theta\in[0,2\pi)$.
	We claim that~$z_0 \in E'(\seq{w})$.
	Without loss of generality, we can assume that~$D_{n_k}\geq2$ for all~$k\geq1$.
	Then, for each~$k\geq1$ select~$j_k \in \{0, \dots , D_{n_k}-1\}$ so that
	\[
		\Big|\theta - \frac{\theta_{n_k} + 2\pi j_k}{D_{n_k}}\Big| \le \frac{2\pi}{D_{n_k}}.
	\]
	If~$z_{n_k,j_k} \neq z_0$ for infinitely many~$k$,
	we can find a subsequence,~$(y_k)$ of~$z_{n_k,j_k}$ so that~$y_k \neq z_0$ for all~$k$.
	If~$z_{n_k,j_k} = z_0$ for eventually for~$k$,
	we instead consider the sequence~$y_k = z_{n_k,j_k+1}$ for which~$y_k \neq z_0$ eventually for~$k$.
	In either case, observe that as~$k\to+\infty$ we have~$|y_k| = \rho_{n_k}\to e^{-s}$ and~$\Arg y_k \to \theta$,
	that is,~$y_k \to z_0$.
	Additionally,~$y_k \in E(\seq{w})\setminus\{z_0\}$ eventually for~$k$,
	and therefore~$z_0 \in E'(\seq{w})$.
	If~$s = +\infty$ (that is,~$0\in\jul(\seq{p})$), then~$\rho_{n_k} \to 0$,
	and working as above for the sequence~$(y_k)_{k\geq1}$ given by~$y_k = z_{n_k,0} \neq 0$, we get that~$0\in E'(\seq{w})$.
	Similarly, if~$s = -\infty$ (that is,~$\infty\in\jul(\seq{p})$),~$\rho_{n_k} \to +\infty$ and we obtain~$\infty \in E'(\seq{w})$.
	This establishes that 
	\[
		\jul(\seq{p})
		= \bigcup\nolimits_{s\in S} \{z\in\CC \such |z| = e^{-s}\}
		\subseteq E'(\seq{w}).
	\]
	
	This additionally implies that~$E'(\seq{w})$ is non-empty thanks to \cref{prop:Julia is non empty}.
	
	For the converse, let~$z\in E'(\seq{w})$,
	and~$(z_k)_{k\ge 1}$ be a sequence in~$E(\seq{w})$ of distinct points so that~$z_k \to_\sigma z$.
	Since~$z_k \in E(\seq{w})$, we get~$z_k \in \big(F^{(n_k)}_{\seq{p}}\big)^{-1}\{w_{n_k}\}$ for some~$n_k\in\mbN$.
	However, each~$\big(F^{(n_k)}_{\seq{p}}\big)^{-1}\{w_{n_k}\}$ is finite and the~$z_k$'s are distinct,
	which means that~$n_k$ can take each value in~$\mbN$ at most finitely many times;
	that is,~$n_k\to+\infty$ as~$k\to+\infty$.
	At the same time,
	\[
		|z_k| = \rho_{n_k} = e^{-s_{n_k} + \frac{1}{D_{n_k}} \log |w_{n_k}|}
		\inline{and}
		|z_k| \xlongrightarrow{k\to+\infty} |z| \inmath{in} \overline{\mbR},
	\]
	Because~$\lim_{n\to+\infty} \frac{1}{D_n}\log{|w_n|} = 0$ by assumption,
	the sequence~$(s_{n_k})$ necessarily converges to~$-\log{|z|} \in \overline{\mbR}$ as~$k\to+\infty$.
	This implies that~$-\log{|z|} \in S$,
	and therefore~$z \in \{|w| = e^{-(-\log|z|)}\} \subseteq \jul(\seq{p})$ concluding the desired set inclusion.
\end{proof} 

In the case when the elements of~$\seq{w}$ lie inside the Julia sets, we get the following result.

\begin{cor}
	Consider a sequence of monomials~$\seq{p}=(p_n)_{n\ge1}$ with~$p_n(z) = a_n z^{d_n}$,~$a_n\in\mbC\setminus\{0\}$ and~$d_n\ge1$,
	and assume that~$d_n\ge2$ for infinitely many~$n$.
	Set~$D_n := \prod_{i=1}^n d_i$.
	Assume there is a sequence of points~$\seq{w} = (w_1, w_2, \dots)$ with~$w_n \in \jul(\shift^n \seq{p})\setminus\{0,\infty\}$ for each~$n\in\mbN$ and so that
	\[
		\lim_{n\to+\infty} \frac{1}{D_n} \log{|w_n|} = 0.
	\]
	Then, the Julia set~$\jul(\seq{p})$ is perfect.
\end{cor}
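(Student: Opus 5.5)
Recall that a set is perfect when it is closed and has no isolated points. By \cref{thm:derived set is julia}, under the standing hypotheses we have $\jul(\seq{p}) = E'(\seq{w})$. A derived set is always closed, so the task is to show that $E'(\seq{w})$ has no isolated points. The plan is to prove
\[
	E(\seq{w}) \subseteq \jul(\seq{p}) = E'(\seq{w}).
\]
Once this holds, every point of $E'(\seq{w})$ is a limit of points of $E(\seq{w})$ distinct from it, and these lie in $E'(\seq{w})$. So no point of $E'(\seq{w})$ is isolated, which is perfectness.

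To get the inclusion, I use the invariance relation of \cref{thm:invariance deterministic}: $\big(F^{(n)}_{\seq{p}}\big)^{-1}\jul(\shift^n\seq{p}) = \jul(\seq{p})$ for every $n$. Since $w_n \in \jul(\shift^n\seq{p})$, it follows that
\[
	\big(F^{(n)}_{\seq{p}}\big)^{-1}\{w_n\} \subseteq \jul(\seq{p})
\]
for each $n$. Taking the union over $n$ gives $E(\seq{w}) \subseteq \jul(\seq{p})$, and \cref{thm:derived set is julia} converts this to $E(\seq{w}) \subseteq E'(\seq{w})$. Here $w_n \neq 0,\infty$ and $\lim \frac{1}{D_n}\log|w_n| = 0$ are exactly the hypotheses that theorem needs.

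I should also check that $\jul(\seq{p})$ is non-empty, so that "perfect" is not vacuous. This holds by \cref{prop:Julia is non empty}, or simply because $E(\seq{w})$ is non-empty, each $w_n$ having $D_n$ preimages.

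I expect no real obstacle beyond routine bookkeeping. One could also argue directly from \cref{cor:monomial_isolated_points}: the only candidates for isolated points are $0$ and $\infty$, and they are excluded because points of $E(\seq{w})$ lie on circles of radii $\rho_n \to 0$ along the relevant subsequence. The derived-set argument above avoids that case analysis.
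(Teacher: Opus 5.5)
Your proposal is correct and follows essentially the same route as the paper. Invariance gives $E(\seq{w}) \subseteq \jul(\seq{p})$, hence $E'(\seq{w}) \subseteq \jul'(\seq{p})$, and combining this with $\jul(\seq{p}) = E'(\seq{w})$ from \cref{thm:derived set is julia} yields $\jul(\seq{p}) \subseteq \jul'(\seq{p})$, i.e.\ the Julia set has no isolated points.
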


\begin{proof}
	Consider the set~$E(\seq{w}) := \bigcup_{n=1}^{+\infty} \big(F^{(n)}_{\seq{p}}\big)^{-1}\{w_n\}$.
	Let~$w_n \in \jul(\tau^n\seq{p})$ for each~$n\in\mbN$ and denote~$\seq{w} = (w_1, w_2,\dots)$.
	From \cref{thm:invariance deterministic}, we have that~$(F^{(n)}_{\seq{p}})^{-1} \jul(\shift^n {\seq{p}}) = \jul(\seq{p})$
	therefore~$\big(F^{(n)}_{\seq{p}}\big)^{-1}\{w_n\}\subseteq\jul(\seq{p})$.
	This gives us that~$E(\seq{w}) \subseteq\jul(\seq{p})$,
	and therefore~$E'(\seq{w}) \subseteq \jul'(\seq{p})$,
	where~$E'(\seq{w})$ and~$\jul'(\seq{p})$ are the derived sets of~$E(\seq{w})$ and~$\jul(\seq{p})$ respectively.
	On the other hand, from \cref{thm:derived set is julia} we get that~$E'(\seq{w}) = \jul(\seq{p})$, where~$E'(\seq{w})$ is the derived set of~$E(\seq{w})$,
	and consequently,~$\jul(\seq{p}) \subseteq \jul'(\seq{p})$.
	As~$\jul(\seq{p})$ is closed, this implies~$\jul(\seq{p}) = \jul'(\seq{p})$,
	and~$\jul(\seq{p})$ is perfect.
\end{proof}


\subsection{Julia sets as Kuratowski limits superior}
\label{sec:Kuratowski}


Consider the monomial~$p(z) = az^d$ where~$a\in\mbC\setminus\{0\}$ and~$d\geq2$.
It is a standard computation to calculate its autonomous Julia set,~$\jul^\mathrm{aut}(p)$,
which is a circle centred at~$0$ with radius~$|a|^{-\frac{1}{d-1}}$:
\begin{equation}
\label{eq:autonomous Julia of monomial}
	\jul^\mathrm{aut}(p) = \{z\in\mbC \such |z| = |a|^{-\frac{1}{d-1}}\}.
\end{equation}

It turns out that for monomial sequences one can retrieve the non-autonomous Julia set as the Kuratowski limit superior of the autonomous Julia sets produced by each non-autonomous iterate,
even though the autonomous and non-autonomous Julia sets need not have any common points.
We make this precise in the theorem below.

But first, we recall the Kuratowski (or Painlev{\'e}-Kuratowski) convergence:
Given a sequence of subsets,~$(E_n)_{n\geq1}$, of a metric space~$(X,d)$, their \emph{(Kuratowski) limit superior} is defined by
\[
	\limsup\nolimits_{n\to+\infty} E_n = 
		\big\{
			x \in X \such \liminf\nolimits_{n\to+\infty} \dist_d(x,E_n) = 0
		\big\}.
\]
The interested reader can look into \cite[\S4]{Rockafellar1998} for these definitions and more details.
One additional property that we will use is the following:
For any sequence of sets~$(E_n)_{n\geq1}$ in~$(X,d)$ it holds that
\begin{equation}
\label{eq:Kuratowski limsup is intersection of unions}
	\limsup\nolimits_{n\to+\infty}E_n
	= \bigcap\nolimits_{N\ge1}\overline{\bigcup\nolimits_{n\ge N}E_n}.
\end{equation}

For this subsection, all set-theoretic limits are understood as Kuratowski limits in~$(\CC,\sigma)$.

\begin{thm}
\label{thm:Kuratowski}
	Let~$\seq{p}=(p_n)_{n\ge1}$ be a sequence of monomials~$p_n(z)=a_n z^{d_n}$ with~$a_n\in\mbC\setminus\{0\}$ and~$d_n\ge1$ for all~$n\in\mbN$,
	and assume that~$d_n\ge2$ for infinitely many~$n$.
	Denote by~$F^{(n)}_{\seq{p}}$ the~$n$-fold iterative composition~$F^{(n)}_{\seq{p}} = p_n\circ\dots\circ p_1$,
	and set
	\[
		\jul^{\mathrm{aut}}\big(F^{(n)}_\seq{p}\big) := \inmath{the autonomous Julia set of the function} F^{(n)}_{\seq{p}}.
	\]
	Then, we have that 
	\[
		\jul(\seq{p}) 
		= \limsup\nolimits_{n\to+\infty} \jul^{\mathrm{aut}}\big(F^{(n)}_\seq{p}\big)
		= \bigcap\nolimits_{N\ge 1}\overline{\bigcup\nolimits_{n\ge N} \jul^{\mathrm{aut}}\big(F^{(n)}_\seq{p}\big)},
	\]
	where the Kuratowski limit superior and the closure are taken in the metric space~$(\CC,\sigma)$.
\end{thm}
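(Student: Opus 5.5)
Write $F^{(n)}_{\seq{p}}(z)=A_nz^{D_n}$. For every $n$ with $D_n\ge2$, \eqref{eq:autonomous Julia of monomial} gives
\[
	\jul^{\mathrm{aut}}\big(F^{(n)}_\seq{p}\big)=\{|z|=r_n\},
	\qquad
	r_n:=|A_n|^{-1/(D_n-1)}=\exp\Big(-\tfrac{D_n}{D_n-1}s_n\Big).
\]
Only finitely many $n$ have $D_n=1$, since $D_n\to+\infty$. Discarding these does not change the Kuratowski limit superior, by \eqref{eq:Kuratowski limsup is intersection of unions}. Set
\[
	t_n:=\tfrac{D_n}{D_n-1}s_n .
\]
By \cref{thm:monomial_julia_accumulation}, the whole statement reduces to the following real-variable claim: the accumulation points of $(t_n)$ in $\overline{\mbR}$ coincide with $S$. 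Indeed, the second equality in the theorem is just \eqref{eq:Kuratowski limsup is intersection of unions}. So it remains to identify $\limsup_n \{|z|=r_n\}$ with $\bigcup_{t}\{|z|=e^{-t}\}$, where $t$ ranges over the accumulation points of $(t_n)$.

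The first step is this set-theoretic identification, which I expect to be routine. Suppose $z\in\CC$ satisfies $\liminf_n \dist_\sigma(z,\{|w|=r_n\})=0$. Then along a subsequence there are points $w_k$ with $|w_k|=r_{n_k}$ and $w_k\to_\sigma z$. Hence $r_{n_k}\to|z|$ in $[0,+\infty]$, so $t_{n_k}\to-\log|z|$ in $\overline{\mbR}$ (this covers $z=0$ and $z=\infty$). Conversely, suppose $t_{n_k}\to t$. For any $z$ with $|z|=e^{-t}$, take $w_k:=r_{n_k}z/|z|$ when $z\ne0,\infty$, or any point of the circle otherwise. Then $w_k\to_\sigma z$, because $r\mapsto$ (the circle of radius $r$) is continuous into the compact sets of $(\CC,\sigma)$, circles of radius tending to $0$ or $+\infty$ converge to $0$ or $\infty$, and the radial projection is continuous.

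The second step, which is the heart of the proof, is to show that $(t_n)$ and $(s_n)$ have the same accumulation points in $\overline{\mbR}$. Note that $t_n-s_n=s_n/(D_n-1)$ and $D_n-1\to+\infty$. If $s_{n_k}\to s\in\mbR$, then $s_{n_k}$ is bounded, so $t_{n_k}-s_{n_k}\to0$ and $t_{n_k}\to s$. If $s_{n_k}\to\pm\infty$, then $t_{n_k}$ has the same sign as $s_{n_k}$ and $|t_{n_k}|\ge|s_{n_k}|$, so $t_{n_k}\to\pm\infty$. The converse follows symmetrically from $s_n=\frac{D_n-1}{D_n}t_n$ with $\frac{D_n-1}{D_n}\to1$.

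The only point needing care is this comparison in the unbounded case, since $s_n/(D_n-1)$ need not tend to $0$ there. The sign and magnitude argument above avoids that issue. Combining both steps with \cref{thm:monomial_julia_accumulation} yields the theorem.
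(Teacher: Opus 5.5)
Your proposal is correct and follows essentially the same route as the paper. Both identify each $\jul^{\mathrm{aut}}\big(F^{(n)}_{\seq{p}}\big)$, for $D_n\ge2$, with the circle of radius $e^{-t_n}$, where $t_n=\frac{D_n}{D_n-1}s_n$; both show that $(t_n)$ and $(s_n)$ have the same accumulation set in $\overline{\mbR}$, match the limit superior of these circles with $\bigcup_{t}\{|z|=e^{-t}\}$, and conclude via \cref{thm:monomial_julia_accumulation}. Your sign-and-magnitude treatment of the unbounded case just spells out the paper's observation that the positive factor $\frac{D_n}{D_n-1}\to1$ preserves limits in $\overline{\mbR}$, including $\pm\infty$.
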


\begin{proof}
	Let~$N_0\in\mbN$ be such that~$D_N\geq2$ for all~$N \geq N_0$.
	Because~$\cup_{n \ge N} \jul^{\mathrm{aut}}\big(F^{(n)}_\seq{p}\big)$ is a decreasing sequence of sets,
	the following equality
	\[
		\bigcap\nolimits_{N\ge1} \overline{\bigcup\nolimits_{n \ge N} \jul^{\mathrm{aut}}\big(F^{(n)}_\seq{p}\big)}
		= \bigcap\nolimits_{N \ge N_0} \overline{\bigcup\nolimits_{n \ge N} \jul^{\mathrm{aut}}\big(F^{(n)}_\seq{p}\big)}.
	\]
	holds true.
	Considering \cref{thm:monomial_julia_accumulation} and \eqref{eq:Kuratowski limsup is intersection of unions}, it remains to show that
	\[
		\bigcap\nolimits_{N \ge N_0} \overline{\bigcup\nolimits_{n \ge N} \jul^{\mathrm{aut}}\big(F^{(n)}_\seq{p}\big)}
		= \bigcup\nolimits_{s\in S} \{z\in\CC \such |z| = e^{-s}\}.
	\]
	
	Choose~$N_0 \in \mbN$ such that~$D_n \ge 2$ for each~$n\ge N_0$.
	Now, let us define
	\[
		t_n := \frac{1}{D_n-1} \log |A_n| = \frac{D_n}{D_n-1} s_n \afterline{for all} n\ge N_0.
	\]
	Because~$\frac{D_n}{D_n-1}\to1$ as~$n\to+\infty$,
	for every converging (in~$\overline{\mbR}$) subsequence of~$(s_n)$ the corresponding subsequence of~$(t_n)$ also converges (in~$\overline{\mbR}$) to the same limit,
	and vice versa.
	Therefore, the set of accumulation points~$T\subseteq\overline{\mbR}$ of~$(t_n)_{n\ge N_0}$ is the same as the set of accumulation points~$S$ of~$(s_n)$,
	i.e.~$T=S$.
	Hence, it is sufficient, and we proceed to show that
	\[
		\bigcap\nolimits_{N \ge N_0} \overline{\bigcup\nolimits_{n\ge N} \jul^{\mathrm{aut}}\big(F^{(n)}_\seq{p}\big)}
		= \bigcup\nolimits_{t\in T} \{|z| = e^{-t}\}.
	\]
	
	Let~$z \in \bigcap_{N \ge N_0} \overline{\bigcup_{n\ge N} \jul^{\mathrm{aut}}\big(F^{(n)}_\seq{p}\big)}$,
	and set~$t := -\log|z| \in \overline{\mbR}$.
	We may then recursively choose a strictly increasing sequence of indices~$(n_k)_{k\ge 1}$, and points~$z_{n_k}\in \jul_{n_k}^{\mathrm{aut}}(\seq{p})$ (with~$n_k \geq N_0$) such that~$z_{n_k} \to_\sigma z$ as~$k\to+\infty$.
	By the definition of~$t_n$ and \eqref{eq:autonomous Julia of monomial}, we see that~$|z_{n_k}| = |A_{n_k}|^{-\frac{1}{D_{n_k}-1}} = e^{-t_{n_k}}$,
	and therefore,~$(t_{n_k})$ must converge to~$-\log|z| = t$ in~$\overline{\mbR}$.
	Thus,~$t \in T$ and~$|z|=e^{-t} \in \cup_{t \in T} \{|z| = e^{-t}\}$.
	
	Conversely, let~$t\in T$, and consider a point~$z\in\CC$ with~$|z|=e^{-t}$.
	There is a subsequence~$(t_{n_k})$ of~$(t_n)$ such that~$t_{n_k}\to t$.
	If~$0<|z|<+\infty$,
	then~$z=|z|e^{i\theta}$ for some~$\theta\in[0,2\pi)$,
	and notice that~$z_{n_k} := e^{-t_{n_k}} e^{i\theta}\in J^{\mathrm{aut}}_{n_k}(\seq{p})$;
	then~$z_{n_k}\to z$.
	If~$|z|=0$, then~$t = +\infty$,
	and take the sequence~$z_{n_k} := e^{-t_{n_k}}$; again~$z_{n_k}\to_\sigma z$.
	Similarly for when~$z = \infty$.
	Hence,~$z\in\bigcap_{N \ge N_0} \overline{\bigcup_{n\ge N} \jul^{\mathrm{aut}}\big(F^{(n)}_\seq{p}\big)}$
	and the theorem follows.
\end{proof}

Combining the ideas in \cref{thm:derived set is julia,thm:Kuratowski}, we get the following additional description.

\begin{thm}
\label{thm:Kuratowski_cand_derived}
	Consider a sequence of monomials~$\seq{p}=(p_n)_{n\ge1}$ with~$p_n(z)=a_n z^{d_n}$,~$a_n\in\mbC\setminus\{0\}$ and~$d_n\ge1$,
	and assume that~$d_n\ge2$ for infinitely many~$n$.
	Let~$\seq{w} = (w_1, w_2, \dots)$ be a sequence of points in~$\CC\setminus \{0,\infty\}$, and define
	\[
		E_n(\seq{w}) := \big(F_{\seq{p}}^{(n)}\big)^{-1}\{w_n\}
		\inline{and}
		E(\seq{w}) := \bigcup\nolimits_{n=1}^{+\infty}E_n(\seq{w}).
	\]
	Then, for the Kuratowski limit superior in~$(\CC,\sigma)$ it holds that
	\begin{equation}
	\label{eq:derived set is Kuratowski limsup}
		E'(\seq{w})
		= \limsup\nolimits_{n\to+\infty} E_n(\seq{w}), 
	\end{equation}
	where~$E'(\seq{w})$ is the derived set of~$E(\seq{w})$.
	If the sequence~$\seq{w}$ additionally satisfies
	\[
		\lim_{n\to+\infty} \frac{1}{D_n} \log{|w_n|} = 0,
	\]
	then~$\jul(\seq{p}) = \limsup_{n\to+\infty} \big(F_{\seq{p}}^{(n)}\big)^{-1}\{w_n\}$.
\end{thm}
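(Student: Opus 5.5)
We prove the set identity \eqref{eq:derived set is Kuratowski limsup} by double inclusion, working with the description \eqref{eq:Kuratowski limsup is intersection of unions} of the limit superior as $\bigcap_{N\ge1}\overline{\bigcup_{n\ge N}E_n(\seq{w})}$. Two structural facts are used throughout. First, each $E_n(\seq{w})$ is finite: it consists of exactly $D_n$ equidistant points on the circle $\{|z|=\rho_n\}$, with $\rho_n=|w_n/A_n|^{1/D_n}$, as computed in the proof of \cref{thm:derived set is julia}. Second, $D_n\to+\infty$.

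\emph{Inclusion $E'(\seq{w})\subseteq\limsup E_n(\seq{w})$.} Let $z\in E'(\seq{w})$. Choose distinct points $z_k\in E(\seq{w})$ with $z_k\to_\sigma z$, and pick $n_k$ with $z_k\in E_{n_k}(\seq{w})$. Each $E_n(\seq{w})$ is finite and the $z_k$ are distinct, so every value of $n$ occurs only finitely often among the $n_k$; hence $n_k\to+\infty$. Fix $N$. Then $z_k\in\bigcup_{n\ge N}E_n(\seq{w})$ for all large $k$, so $z$ lies in the closure of that union. Since $N$ was arbitrary, $z$ lies in the intersection over $N$.

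\emph{Inclusion $\limsup E_n(\seq{w})\subseteq E'(\seq{w})$.} This is the step needing care, because a point of the limsup could a priori be one point $z$ that lies in infinitely many $E_n(\seq{w})$ with no other approximants. Take $z$ in the limsup. Recursively choose indices $n_1<n_2<\dots$ and points $y_k\in E_{n_k}(\seq{w})$ with $\sigma(y_k,z)<1/k$; this is possible because for every $N$ the point $z$ is in the closure of $\bigcup_{n\ge N}E_n(\seq{w})$.

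If $y_k\ne z$ for infinitely many $k$, those $y_k$ are points of $E(\seq{w})\setminus\{z\}$ converging to $z$, and we are done. Otherwise $y_k=z$ for all large $k$. Then $z\notin\{0,\infty\}$, since $w_n\in\CC\setminus\{0,\infty\}$, and $|z|=\rho_{n_k}$ for all large $k$. Replace $y_k$ by its neighbour $y_k'$ on the same circle, obtained by rotating by the angle $2\pi/D_{n_k}$. This is the same device as in the proof of \cref{thm:derived set is julia}. Then $y_k'\in E_{n_k}(\seq{w})$, $y_k'\neq z$ once $D_{n_k}\ge2$, and $|y_k'-z|\le 2\pi|z|/D_{n_k}\to0$. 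Hence $z\in E'(\seq{w})$.

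\emph{The final assertion.} When $\frac{1}{D_n}\log|w_n|\to0$, \cref{thm:derived set is julia} gives $\jul(\seq{p})=E'(\seq{w})$. Combining this with \eqref{eq:derived set is Kuratowski limsup} yields $\jul(\seq{p})=\limsup_{n\to+\infty}\big(F^{(n)}_{\seq{p}}\big)^{-1}\{w_n\}$.
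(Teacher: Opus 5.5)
Your proposal is correct and follows essentially the same route as the paper. For $E'(\seq{w})\subseteq\limsup_{n\to+\infty}E_n(\seq{w})$, finiteness of each $E_n(\seq{w})$ forces $n_k\to+\infty$; for the reverse inclusion, the paper uses the same dichotomy and handles eventually constant approximants $z_k=z$ by the same rotation by $e^{2\pi i/D_{n_k}}$, then deduces the final claim from \cref{thm:derived set is julia}, with your use of $\bigcap_{N\ge1}\overline{\bigcup_{n\ge N}E_n(\seq{w})}$ instead of sequences being only a cosmetic difference.
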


We remark that the set theoretic identity \eqref{eq:derived set is Kuratowski limsup} need not hold in general for polynomials.

\begin{proof}
	The latter claim follows directly from \cref{thm:derived set is julia} once \eqref{eq:derived set is Kuratowski limsup} is established.
	Therefore, we proceed towards this goal.
	
	We first notice that each~$E_n(\seq{w})=\big(F_{\seq{p}}^{(n)}\big)^{-1}\{w_n\}$ is finite containing exactly~$D_n$ distinct and equidistant points in~$\CC\setminus\{0,\infty\}$.
	Let~$z \in E'(\seq{w})$.
	There exists a sequence of \emph{distinct} points~$(z_k)_{k\geq1}$ in~$E(\seq{w})$ such that~$z_k \to_\sigma z$ as~$k\to+\infty$,
	and therefore for each~$k\in\mbN$ there is some~$n_k\in\mbN$ such that~$z_k \in E_{n_k}(\seq{w})$.
	Since the~$E_n(\seq{w})$'s are finite sets and the~$z_k$'s distinct,
	the indices~$n_k$ can take each value in~$\mbN$ at most finitely many times;
	this implies that~$n_k\to+\infty$ as~$k\to+\infty$.
	Immediately we get that~$z \in \limsup_{n\to+\infty} E_n(\seq{w})$.
	
	Conversely, let~$z\in \limsup_{n\to+\infty}E_n(\seq{w})$.
	Then, by the definition there must exist a sequence~$(n_k)_{k\geq1}$ (with~$n_k\to+\infty$) and points~$z_{k}\in E_{n_k}(\seq{w})$ such that~$z_k \to_\sigma z$.
	Next, there are two possibilities:
	either~$z_k \neq z$ for infinitely many~$k$ or~$z_k = z$ eventually for~$k$.
	In the former case, we may pass to a subsequence~$(z_{k_j})_{j\geq1}$ such that~$z_{k_j}\neq z$ for all~$j$ and~$z_{k_j} \to_\sigma z$ as~$j\to+\infty$,
	hence~$z \in E'(\seq{w})$.
	In the latter case, we may assume that~$z_k=z$ for all~$k$.
	Since~$E_{n_k}(\seq{w})\subseteq\CC\setminus\{0,\infty\}$,
	necessarily also~$z\in\CC\setminus\{0,\infty\}$.
	Now, let us define
	\[
		\zeta_k
		:= e^{\frac{2\pi i}{D_{n_k}}} z_k = e^{\frac{2\pi i}{D_{n_k}}} z \afterline{for all} k\geq1.
	\]
	We may assume without loss of generality that for all~$k\geq1$ we have~$D_{n_k}\geq2$ so that~$\zeta_k \neq z$.
	Yet,~$\zeta_k \to z$,
	since~$D_n\to+\infty$ as~$n\to+\infty$;
	thus also~$\zeta_k \to_\sigma z$ as~$\zeta_k, z\neq \infty$.
	Moreover,~$\zeta_k \in E_{n_k}(\seq{w}) \subseteq E(\seq{w})$ as~$z_k \in E_{n_k}(\seq{w})$.
	This means that~$z$ is an accumulation point of~$E(\seq{w})\setminus\{z\}$, i.e.~$z \in E'(\seq{w})$.
\end{proof}

\begin{cor}
	Let~$\seq{p}$ be a sequence of monomials as in \cref{thm:Kuratowski_cand_derived}.
	Then, for any~$w\in \CC\setminus \{0,\infty\}$ we have~$\jul(\seq{p}) = \limsup_{n\to+\infty} \big(F_{\seq{p}}^{(n)}\big)^{-1}\{w\}$.
	Furthermore, for any non-empty finite set~$B\subseteq\CC\setminus\{0,\infty\}$ it holds that~$\jul(\seq{p}) = \limsup_{n\to+\infty} \big(F_{\seq{p}}^{(n)}\big)^{-1} B$.
\end{cor}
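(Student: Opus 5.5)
The first claim is a direct specialisation of \cref{thm:Kuratowski_cand_derived}. We take the constant sequence $\seq{w}=(w,w,w,\dots)$ with $w\in\CC\setminus\{0,\infty\}$. Then $\log|w_n|=\log|w|$ is a fixed real number. Since $d_n\ge2$ for infinitely many $n$, we have $D_n\to+\infty$, and hence $\frac{1}{D_n}\log|w_n|\to0$. The second part of \cref{thm:Kuratowski_cand_derived} therefore gives $\jul(\seq{p})=\limsup_{n\to+\infty}\big(F^{(n)}_{\seq{p}}\big)^{-1}\{w\}$.

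For a finite set $B=\{b_1,\dots,b_m\}\subseteq\CC\setminus\{0,\infty\}$, we write $\big(F^{(n)}_{\seq{p}}\big)^{-1}B=\bigcup_{j=1}^m\big(F^{(n)}_{\seq{p}}\big)^{-1}\{b_j\}$. We then use the elementary fact that the Kuratowski limit superior commutes with \emph{finite} unions:
\[
	\limsup\nolimits_{n\to+\infty}\bigcup\nolimits_{j=1}^m E^j_n=\bigcup\nolimits_{j=1}^m\limsup\nolimits_{n\to+\infty}E^j_n .
\]
To prove this, we use the characterisation \eqref{eq:Kuratowski limsup is intersection of unions}, or directly the $\liminf\dist$ definition. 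The inclusion $\supseteq$ is immediate from monotonicity. For $\subseteq$, suppose $\liminf_n\dist_\sigma(x,\bigcup_jE^j_n)=0$. Then there is a subsequence $n_k$ with $\dist_\sigma(x,\bigcup_jE^j_{n_k})\to0$. Since $\dist_\sigma(x,\bigcup_j E^j_{n})=\min_j\dist_\sigma(x,E^j_{n})$, for each $k$ some index $j_k\in\{1,\dots,m\}$ attains the minimum. By pigeonhole, some fixed $j$ occurs for infinitely many $k$, and along those $k$ we get $\dist_\sigma(x,E^j_{n_k})\to0$, i.e.\ $x\in\limsup_nE^j_n$.

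Applying the first part with $w=b_j$ for each $j$ then gives
\[
	\limsup\nolimits_{n}\big(F^{(n)}_{\seq{p}}\big)^{-1}B=\bigcup\nolimits_{j=1}^m\jul(\seq{p})=\jul(\seq{p}).
\]
Finiteness of $B$ is used only in the pigeonhole step, which is the one point needing care. With an infinite $B$ the union could accumulate elsewhere, for instance if $B$ contains points tending to $0$ or $\infty$.
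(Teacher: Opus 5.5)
Your proposal is correct and follows essentially the same route as the paper. The first claim is the constant-sequence case of \cref{thm:Kuratowski_cand_derived}, where you usefully make explicit that $\frac{1}{D_n}\log|w|\to0$ because $D_n\to+\infty$; the second is a pigeonhole argument over the finitely many points of $B$. The only difference is presentational: you isolate the pigeonhole step as the general fact that the Kuratowski limit superior commutes with finite unions, phrased via $\dist_\sigma$, whereas the paper argues inline with approximating points $z_k\in\big(F^{(n_k)}_{\seq{p}}\big)^{-1}B$ and their images $b_k\in B$.
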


\begin{proof}
	The first claim follows directly from the theorem above.
	
	Now, take~$B$ to be any finite non-empty subset of~$\CC\setminus\{0,\infty\}$.
	For~$b\in B$, set
	\[
		E_n(b) := \big(F_{\seq{p}}^{(n)}\big)^{-1}\{b\} \inline{and} E_n(B) := \big(F_{\seq{p}}^{(n)}\big)^{-1} B.
	\]
	Immediately, we have that
	\[
		\jul(\seq{p}) = \limsup\nolimits_{n\to+\infty}E_n(b)
	\implies
		\jul(\seq{p}) \subseteq \limsup\nolimits_{n\to+\infty}E_n(B).
	\]
	Conversely take~$z\in \limsup_{n\to+\infty}E_n(B)$.
	Then, there exist indices~$n_k\to+\infty$ and points~$z_k\in E_{n_k}(B)$ such that~$z_k \to_\sigma z$,
	and, in turn, for each~$k$ there is a~$b_k \in B$ such that~$F_{\seq{p}}^{(n_k)}(z_k)=b_k$.
	But~$B$ is finite,
	and therefore there is a subsequence~$(b_{m_k})_{k\geq1}$ of~$(b_k)$ such that~$b_{m_k}=b$ for all~$k\geq1$ for some fixed~$b \in B$.
	Hence,~$z_{m_k} \in \big(F_{\seq{p}}^{(n_{m_k})}\big)^{-1}\{b\} = E_{n_{m_k}}(b)$ for each~$k\geq1$,
	and thus~$z \in \limsup_{n\to+\infty}E_n(b) = \jul(\seq{p})$,
	since~$z_{m_k} \to_\sigma z$.
\end{proof}


\section{Consequences: a factory of examples}
\label{sec:examples and contructions}


\Cref{thm:monomial_julia_accumulation} identifies the geometry of~$\jul(\seq{p})$ for a monomial sequence~$\seq{p}$ in terms of the accumulation set~$S\subseteq\overline{\mbR}$ of the real sequence~$(s_n)_{n\geq1}$.
In fact, the non-autonomous Julia sets for a sequence of monomials are unions of circles centred at~$0$ and with radii in the closed set~$E=\{e^{-s} \such s\in S\} \subseteq[0,+\infty]$ (where~$C(0,+\infty) := \{\infty\}$).
Moreover, for any non-empty closed set~$E\subseteq[0,+\infty]$ one can choose the coefficients~$a_n$ and degrees~$d_n$ such that~$E$ is realized.
In other words, in order to \emph{build} a Julia set with prescribed ``radial geometry'' it suffices to prescribe the set~$S$ of accumulation points of~$s_n = \frac{1}{D_n} \log|A_n|$,
and then choose appropriate coefficients~$a_n$ and degrees~$d_n$ for~$\seq{p}$ realising this sequence.

In this section, we first explain how one can derive a sequence~$(s_n)_{n\geq1}$ of points in~$\mbR$,
whose set~$S\subseteq\overline{\mbR}$ of accumulation points is given,
and subsequently a sequence of monomials realising the resulting sequence~$(s_n)$.
Then, we proceed to extract a wide range of constructions and examples as direct consequences of \cref{thm:monomial_julia_accumulation} and the invariance principle \cref{thm:invariance deterministic}.

\begin{proof}[Construction of a monomial sequence from given set of accumulation points]
	Let~$S$ be a non-empty closed subset of~$\overline{\mbR}$.
	
	First, we consider the case when~$\pm\infty \not\in S$.
	Let~$m\in\mbN$ and consider the dyadic intervals
	\[
		I_{m,k} = [k2^{-m}, (k+1)2^{-m}] \inline{so that} \bigcup\nolimits_{k=-m2^m}^{m2^m-1} I_{m,k} = [-m,m];
	\]
	there are~$2m2^m$ many such intervals~$I_{m,k}$ for each~$m$.
	Notice that the intersection~$S\cap I_{m,k}$ is always a compact set (for all possible~$m$ and~$k$).
	For each~$m\in\mbN$,
	if~$S\cap I_{m,k} = \emptyset$ for all~$k \in [-m2^m,m2^m)\cap\mbZ$,
	we set~$S_m := \emptyset$;
	otherwise, we set
	\[
		x_{m,k} := \min \bigl(S\cap I_{m,k}\bigr) 
	\inline{and}
		S_m := \bigcup\nolimits_{k=-m2^m}^{m2^m-1} \{x_{m,k} \such \text{if}\ S\cap I_{m,k} \neq \emptyset\}.
	\]
	We define the sequence~$(s_n)_{n\in\mbN}$ by first listing all the elements of~$S_1$ (as~$s_1$,~$s_2$,~$s_3$, etc.\ until~$S_1$ is exhausted), then the elements of~$S_2$, then of~$S_3$, and so on.
	
	To see that~$S$ is precisely the accumulation points of~$(s_n)$,
	observe first that~$s_n \in S$ for all~$n\in\mbN$ by construction.
	Since~$S$ is closed (in~$\overline{\mbR}$),
	every subsequential limit of~$(s_n)$ belongs to~$S$.
	Conversely, let~$x \in S$.
	Necessarily,~$x\in[-m,m]$ for some sufficiently large~$m$,
	and choose~$k_m$ so that~$x \in I_{m,k_m}$.
	By construction,~$x_{m,k_m} \in I_{m,k_m}$,
	and therefore~$|x - x_{m,k_m}| \leq 1/2^m$.
	Iterating for all larger~$m$, we get a subsequence of~$(s_n)$ converging to~$x$.
	
	Second, assume that~$+\infty$, or~$-\infty$, or both are elements of~$S$.
	We repeat the same construction for~$(s_n)$ as above with the following addition:
	For each~$m\in\mbN$, after we have exhausted the points of~$S_m$,
	we append the point(s)~$\pm m$ accordingly in the numbering of~$(s_n)$.
	In this case, the sequence(s)~$(\pm1,\pm2,\pm3,\dots)$ is/are a subsequence of~$(s_n)$ which converge(s) to~$\pm\infty$.
	For the set~$S\setminus\{\pm\infty\}$, the argument above readily applies.
	
	To construct the appropriate monomials yielding~$(s_n)$, notice \eqref{eq:s_n} immediately implies
	\[
		|a_n| = e^{D_n(s_n-s_{n-1})} \afterline{for all} n\geq2 \inline{and also} |a_1|=e^{d_1s_1}.
	\]
	In picking the~$d_i$'s and the~$a_i$'s, one needs only make sure that the above equalities are satisfied
	and that infinitely many of the degrees~$d_i$'s are at least~$2$.
	In particular, one can pick
	\[
		d_n=2,
		\inline{and}
		a_n = e^{2^n(s_n-s_{n-1})} \afterline{for all} n\geq1
	\]
	with~$s_0=0$,
	which gives a sequence of quadratic monomials.
	This will be a frequent choice in our examples below.
\end{proof}

The above argument immediately yields the following additional property.

\begin{prop}
	Let~$\seq{p}=(p_n)_{n\ge1}$ be a sequence of monomials~$p_n(z)=a_n z^{d_n}$ with~$a_n\in\mbC\setminus\{0\}$,~$d_n\ge1$ for all~$n\in\mbN$.
	Let~$\seq{\theta} = (\theta_1,\theta_2,\dots) \in [0,2\pi)^\mbN$ be a sequence of angles.
	Then, for the sequence of monomials~$e^{i\seq{\theta}} \odot \seq{p} := (e^{i\theta_1}p_1,e^{i\theta_2}p_2,e^{i\theta_3}p_3,\dots)$ it holds that~$\jul(e^{i\seq{\theta}} \odot \seq{p}) = \jul(\seq{p})$,
	and~$\fat(e^{i\seq{\theta}} \odot \seq{p}) = \fat(\seq{p})$.
\end{prop}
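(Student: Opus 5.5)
The proof will rest on one observation: multiplying the coefficients by unimodular constants changes the non-autonomous iterates only by a rotation in the target. Write $\tilde{\seq{p}} := e^{i\seq{\theta}} \odot \seq{p}$, so that $\tilde p_n(z) = \tilde a_n z^{d_n}$ with $\tilde a_n = e^{i\theta_n} a_n$ and $|\tilde a_n| = |a_n|$. By the formula $F^{(n)}_{\tilde{\seq{p}}}(z) = \tilde A_n z^{D_n}$, where $\tilde A_n = \prod_{i=1}^n \tilde a_i^{D_n/D_i}$, we have
\[
	\tilde A_n = e^{i\phi_n} A_n, \qquad \phi_n := \sum\nolimits_{i=1}^n \theta_i D_n/D_i,
\]
and therefore $F^{(n)}_{\tilde{\seq{p}}} = R_{\phi_n}\circ F^{(n)}_{\seq{p}}$ with $R_\phi(w) = e^{i\phi}w$ (extended by $R_\phi(\infty)=\infty$).

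The key step is to note that each rotation $R_\phi$ is an isometry of $(\CC,\sigma)$. This can be seen directly from the chordal formula and the fact that $\sigma$ depends only on the chordal distance. Consequently,
\[
	\sigma\big(F^{(n)}_{\tilde{\seq{p}}}(z),F^{(n)}_{\tilde{\seq{p}}}(w)\big) = \sigma\big(F^{(n)}_{\seq{p}}(z),F^{(n)}_{\seq{p}}(w)\big)
\]
for all $z,w$ and all $n$. Thus the family $\{F^{(n)}_{\tilde{\seq{p}}}\}$ is spherically equicontinuous at a point exactly when $\{F^{(n)}_{\seq{p}}\}$ is. By \cref{thm:Montel theorem spherical}, the two families are normal on exactly the same open sets, so $\fat(\tilde{\seq{p}}) = \fat(\seq{p})$. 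Taking complements gives $\jul(\tilde{\seq{p}}) = \jul(\seq{p})$. This argument needs no assumption on the degrees.

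Alternatively, following the remark that the proposition comes out of the preceding construction, I could note that $s_n$ depends only on the moduli $|a_i|$. Hence $\tilde s_n = s_n$, and the two sequences have the same accumulation set $S$. When infinitely many $d_n\ge2$, \cref{thm:monomial_julia_accumulation} then gives equality of the Julia sets at once. In the eventually linear case, \cref{prop:monomial_eventually_linear} together with $|\tilde A_n| = |A_n|$ gives the same conclusion. I will present the isometry argument as the main proof, since it covers every case uniformly, and mention this second route as a remark.

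There is no real obstacle here. The only point needing care is checking that $R_\phi$ preserves $\sigma$ including at $\infty$. This holds because $|e^{i\phi}z - e^{i\phi}w| = |z-w|$ and $|e^{i\phi}z| = |z|$ leave the chordal metric invariant, and $R_\phi$ fixes $\infty$.
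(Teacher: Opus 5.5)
Your proof is correct, and your main argument takes a different route from the paper's. The paper gives no separate proof. It says the proposition follows immediately from the construction preceding it: $s_n=\sum_{i\le n}D_i^{-1}\log|a_i|$ involves only the moduli $|a_i|$, so the accumulation set $S$ is unchanged, and Theorem~\ref{thm:monomial_julia_accumulation} returns the same Julia set. That is exactly your secondary route.

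Your isometry argument instead works directly with the families. The rotational equivariance of monomials gives $F^{(n)}_{e^{i\seq{\theta}}\odot\seq{p}}=R_{\phi_n}\circ F^{(n)}_{\seq{p}}$. Since rotations are $\sigma$-isometries, the two families have identical moduli of continuity at every point, and Montel's theorem (Theorem~\ref{thm:Montel theorem spherical}) then gives equal Fatou sets.

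Your route is independent of the classification theorem, and in particular of its hypothesis that $d_n\ge2$ for infinitely many $n$, which the proposition does not assume. Read literally, the paper's one-line justification needs that hypothesis. Your remark correctly covers the eventually linear case with Proposition~\ref{prop:monomial_eventually_linear}, whose criteria depend only on $|A_n|=|\tilde A_n|$. The paper's route, once the main theorem is available, is a one-liner and makes the conceptual point that the Julia set depends only on $S$, hence only on the moduli of the coefficients and the degrees.
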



\subsection{Level-set constructions from monomial sequences}
\label{sec:monomial_julia_trasport}


In this section, we turn \cref{thm:monomial_julia_accumulation} into a flexible construction scheme.
By inserting a single non-constant rational map~$f$ at the beginning of the itinerary~$(p_1,p_2,p_3,\dots)$ and using the invariance principle of \cref{thm:invariance deterministic},
the circles making up the Julia sets pull back to \emph{level sets} of~$|f|$.
Here, we streamline this idea to produce a plethora of examples.

For convenience, we write
\[
	|f|^{-1}\{\rho\} := \{z\in\CC \such |f(z)|=\rho\},
	\inline{and}
	|f|^{-1}E := \{z\in\CC \such |f(z)| \in E\}.
\]
for any~$\rho\in[0,+\infty]$ and any set~$E\subseteq[0,+\infty]$.

\begin{thm}
\label{thm:pullback_levelsets}
	Let~$f:\CC\to\CC$ be a non-constant rational map and~$E\subseteq[0,+\infty]$ a non-empty closed set in~$\overline{\mbR}$.
	There exists a sequence of quadratic monomials~$\seq{p}=(p_1,p_2,\dots)$ such that
	\[
		\jul(\tilde{\seq{p}})=|f|^{-1}E,
	\]
	where the rational sequence~$\tilde{\seq{p}}$ is given by~$\tilde{\seq{p}} := (f,p_1,p_2,\dots)$.
\end{thm}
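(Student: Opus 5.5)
The plan is to realise the radii in $E$ as the Julia set of a quadratic monomial sequence, using the construction above, and then pull that set back through $f$ with the invariance principle. Concretely, define $S := \{-\log\rho \such \rho\in E\}\subseteq\overline{\mbR}$, with the conventions $-\log 0=+\infty$ and $-\log(+\infty)=-\infty$. The map $\rho\mapsto-\log\rho$ is a homeomorphism $[0,+\infty]\to\overline{\mbR}$. Since $E$ is non-empty and closed in $[0,+\infty]$, the set $S$ is therefore non-empty and closed in $\overline{\mbR}$. Moreover $\{e^{-s}\such s\in S\}=E$.

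Next I apply the construction of a monomial sequence from a given set of accumulation points. This produces a real sequence $(s_n)$ whose set of accumulation points in $\overline{\mbR}$ is exactly $S$. Taking $d_n=2$ and $a_n=e^{2^n(s_n-s_{n-1})}$ with $s_0=0$ gives quadratic monomials $p_n(z)=a_nz^2$. The identity \eqref{eq:s_n} then returns precisely this $(s_n)$, and $d_n\ge2$ for all $n$. By \cref{thm:monomial_julia_accumulation},
\[
	\jul(\seq{p})=\bigcup\nolimits_{s\in S}\{z\in\CC\such |z|=e^{-s}\}=\{w\in\CC\such |w|\in E\},
\]
with the paper's conventions $|\infty|=+\infty$ and $C(0,+\infty)=\{\infty\}$. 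In other words, $0\in\jul(\seq{p})$ exactly when $0\in E$, and $\infty\in\jul(\seq{p})$ exactly when $+\infty\in E$.

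Finally, set $\tilde{\seq{p}}=(f,p_1,p_2,\dots)$, so that $\shift\tilde{\seq{p}}=\seq{p}$. All maps in $\tilde{\seq{p}}$ are non-constant rational maps. By \cref{thm:invariance deterministic},
\[
	\jul(\tilde{\seq{p}})=f^{-1}\jul(\shift\tilde{\seq{p}})=f^{-1}\{w\such|w|\in E\}=\{z\in\CC\such |f(z)|\in E\}=|f|^{-1}E.
\]

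There is no real obstacle; the only care needed is bookkeeping at the endpoints. I would check that the conventions $e^{-\infty}=0$, $e^{+\infty}=+\infty$ and $|\infty|=+\infty$ make the correspondence between $E$ and $S$ exact, including the points $0$ and $\infty$. I would also check that the construction handles $\pm\infty\in S$: it does, by appending $\pm m$ to the sequence. Poles of $f$ map to $\infty$, so they lie in $|f|^{-1}E$ exactly when $+\infty\in E$, which is consistent with the formula.
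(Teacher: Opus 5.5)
Your proposal is correct and is essentially the paper's proof. You transfer $E$ to the closed set $S=\{-\log\rho\}$, realise $S$ as the accumulation set of $(s_n)$ via the quadratic monomials $p_n(z)=e^{2^n(s_n-s_{n-1})}z^2$, apply \cref{thm:monomial_julia_accumulation}, and pull back through $f$ with \cref{thm:invariance deterministic}. Your extra bookkeeping, namely that $\rho\mapsto-\log\rho$ is a homeomorphism $[0,+\infty]\to\overline{\mbR}$ together with the endpoint conventions at $0$ and $\infty$, fills in details the paper leaves implicit.
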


\begin{proof}
	Consider the non-empty closed set~$S = \{-\log r \such r \in E\} \subseteq \overline{\mbR}$.
	It is (always) possible to find a sequence~$(s_n)_{n\in\mbN}$ in~$\mbR$,
	whose set of accumulation points is exactly~$S$.
	Set~$s_0=0$,
	and consider the monomials~$p_n(z) = e^{2^n(s_n-s_{n-1})} z^2$ for all~$n\geq1$.
	\Cref{thm:monomial_julia_accumulation}, then, yields that
	\[
		\jul(\seq{p}) = \bigcup\nolimits_{s\in S}\{z\in\CC \such |z|=e^{-s}\} = \{z\in\CC \such |z|\in E\}.
	\]
	
	Now, consider the rational sequence~$\tilde{\seq{p}}=(f,p_1,p_2,\dots)$.
	Our claim follows using \cref{thm:invariance deterministic}:
	\[
		\jul(\tilde{\seq{p}})
		= f^{-1} \jul(\shift\tilde{\seq{p}})
		= f^{-1} \jul(\seq{p})
		= f^{-1} \{z\in\CC \such |z|\in E\}
		= \{w\in\CC \such |f(w)|\in E\}
		= |f|^{-1}E.
\qedhere
	\]
\end{proof}

We remark that the monomials in \cref{thm:pullback_levelsets} need not be quadratic in general.

\Cref{thm:pullback_levelsets} shows that the geometry described in \cref{thm:monomial_julia_accumulation} can be \emph{transported} far beyond concentric circles:
for any non-constant rational map~$f$ and any non-empty closed~$E\subseteq[0,+\infty]$, the set~$|f|^{-1}E$ occurs as a non-autonomous Julia set.
When~$f$ happens to be a polynomial, the level curves~$|f|^{-1}\{\rho\}$ for~$\rho\in[0,+\infty]$ are (polynomial) \emph{lemniscates};
in particular when~$f(z)=(z-a)(z-b)$, they are the classical \emph{Cassini ovals},
and when~$f(z)=z^d-1$ for some~$d\in\mbN$ they form~$d$-fold symmetric ``petalled'' lemniscates/roses.
When~$f$ is a M{\"o}bius transformation, the level curves are generalized circles;
for instance,~$f(z)=\frac{z-1}{z+1}$ produces an \emph{Apollonius pencil} of coaxial generalised circles.
See these examples in \cref{fig:levelset_gallery}.

\begin{figure}[!ht]
\centering
	\begin{subfigure}[t]{0.48\linewidth}
	\centering
		\includegraphics[width=\linewidth]{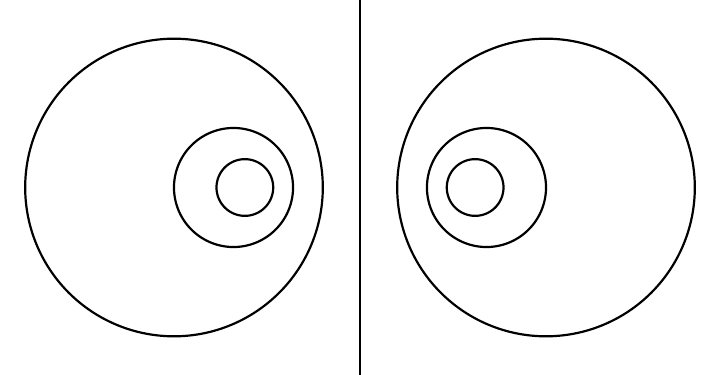}
	\caption{An Apollonius pencil of coaxial generalized circles:~$f(z)=\frac{z-1}{z+1}$.}
	\label{fig:levelset_pencil}
	\end{subfigure}
\hfill
	\begin{subfigure}[t]{0.48\linewidth}
	\centering
		\includegraphics[width=\linewidth]{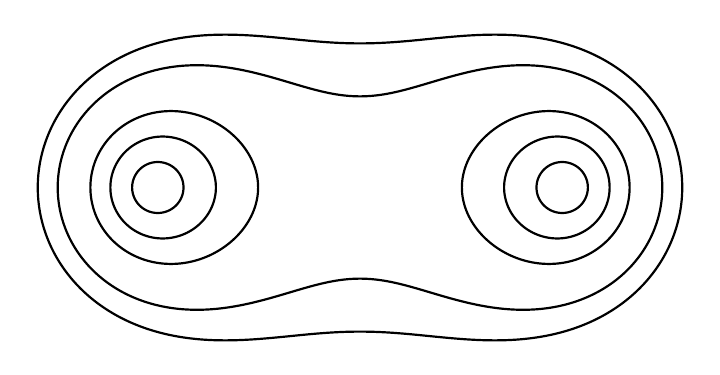}
	\caption{Cassini-type lemniscates:~$f(z)=(z-a)(z-b)$.}
	\label{fig:levelset_cassini}
	\end{subfigure}
	
	\begin{subfigure}[t]{0.32\linewidth}
	\centering
		\includegraphics[width=\linewidth]{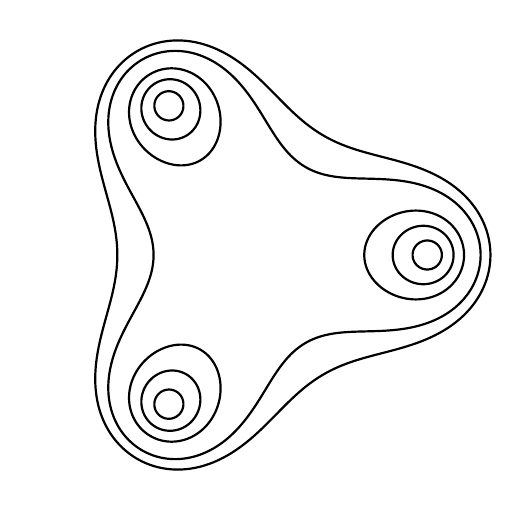}
	\caption{Lemniscate family ($3$-fold symmetry):~$f(z)=z^3-1$.}
	\label{fig:levelset_m3}
	\end{subfigure}
\hfill
	\begin{subfigure}[t]{0.32\linewidth}
	\centering
		\includegraphics[width=\linewidth]{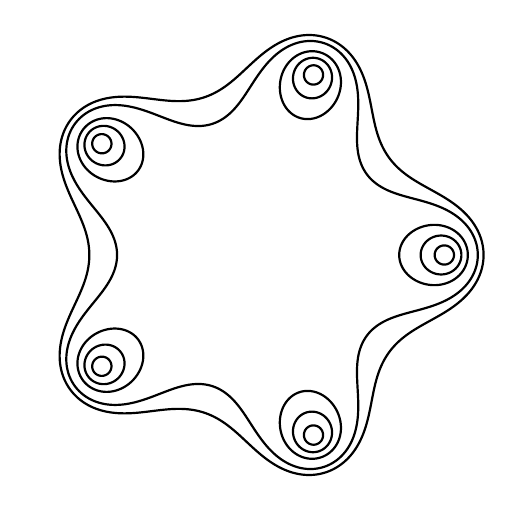}
	\caption{Lemniscate family ($5$-fold symmetry):~$f(z)=z^5-1$.}
	\label{fig:levelset_m5}
	\end{subfigure}
\hfill
	\begin{subfigure}[t]{0.32\linewidth}
	\centering
		\includegraphics[width=\linewidth]{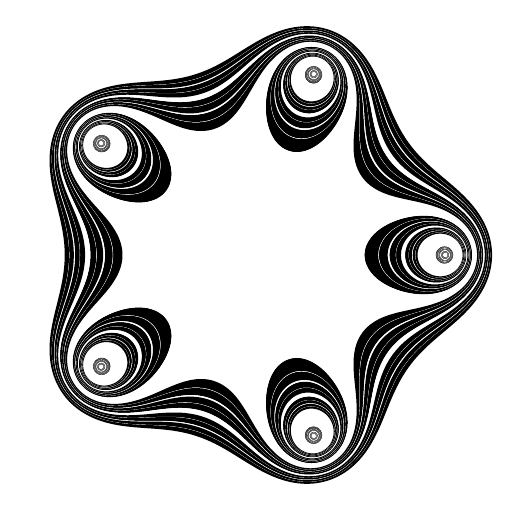}
	\caption{``Thick'' lemniscate band ($5$-fold symmetry):~$f(z)=z^5-1$.}
	\label{fig:levelset_m5_thick}
	\end{subfigure}
\caption{%
	Examples of Julia sets produced through \cref{thm:pullback_levelsets}:
	each panel depicts the set~$|f|^{-1}E$ for a rational map~$f$ and a closed set~$E\subseteq[0,+\infty]$.
	}
\label{fig:levelset_gallery}
\end{figure}

An interesting question is what kinds of examples of Julia sets can be produced through \cref{thm:pullback_levelsets}.
One can study how the geometry and the ``level of disconnectedness'' of the set~$E\subseteq\overline{\mbR}$ determines the corresponding properties of the set~$|f|^{-1}E$ for a given rational map~$f$, or for a given closed set~$E$.
This is beyond the scope of this text,
but we do provide some interesting cases in the sections to follow.


\subsection{Julia set with empty interior and positive area}
\label{sec:empty int and positive area}


A Julia set in the autonomous setting has necessarily no interior unless it is the entire Riemann sphere.
However, there are examples of autonomous Julia sets (which are not the sphere) with positive area \cite{Buff2012}.
In the non-autonomous setting, the existence of such Julia sets with no interior yet positive area becomes immediate thanks to \cref{thm:monomial_julia_accumulation}.
We construct such a Julia set below in \cref{eg:jul_no_int_pos_area},
but first some basic properties are due.

We denote by~$m_1$ the~$1$-dimensional Lebesgue measure on~$\mbR$ and by~$m_2$ the~$2$-dimensional Lebesgue measure on~$\mbR^2 \simeq \mbC$.

\begin{lem}
\label{lem:interior_area}
	Let~$E\subseteq[0,+\infty)$ be a Borel set and let~$J_E = \{z\in\mbC \such |z| \in E\}$.
	Then,
	\[
		m_2(J_E) = 2\pi\int_E t\,dm_1(t),
	\]
	and the following hold:
	\begin{enumerate}
		\item $J_E$ has interior if, and only if,~$E$ contains an open interval;
		\item if~$E\subseteq[0,R]$ for some~$R\in[0,+\infty)$, then~$m_2(J_E)>0$ if, and only if,~$m_1(E)>0$.
	\end{enumerate}
\end{lem}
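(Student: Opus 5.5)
The area formula comes from polar coordinates. The map $\Phi:(0,+\infty)\times[0,2\pi)\to\mbC\setminus\{0\}$, $\Phi(t,\theta)=te^{i\theta}$, is a diffeomorphism onto its image with Jacobian $t$. Moreover, $J_E\setminus\{0\}=\Phi\bigl((E\setminus\{0\})\times[0,2\pi)\bigr)$, and this set is Borel as the preimage of $E$ under the continuous map $z\mapsto|z|$. The change of variables formula together with Tonelli's theorem then gives
\[
	m_2(J_E)=\int_0^{2\pi}\!\int_{E\setminus\{0\}} t\,dm_1(t)\,d\theta = 2\pi\int_E t\,dm_1(t).
\]
The point $0$ is harmless, since it has measure zero in both $\mbR$ and $\mbC$.

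For (1), suppose first that $E$ contains an open interval $(a,b)$ with $0\le a<b$. Then $J_E$ contains the open annulus $\{a<|z|<b\}$, so it has interior. Conversely, suppose $J_E$ contains a disk $D(z_0,r)$. I split into two cases.
\begin{itemize}
	\item If $z_0\neq0$, the continuous open map $z\mapsto|z|$ on $\mbC\setminus\{0\}$ sends $D(z_0,r)\setminus\{0\}$ onto a set containing an open interval around $|z_0|$. To see this directly, take the points $z_0(1+\epsilon)$ for $|\epsilon|<r/|z_0|$.
	\item If $z_0=0$, then $[0,r)\subseteq E$, so $E\supseteq(0,r)$.
\end{itemize}
In either case the image lies in $E$, so $E$ contains an open interval.

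For (2), assume $E\subseteq[0,R]$. If $m_1(E)=0$, the integral vanishes, so $m_2(J_E)=0$. If $m_1(E)>0$, then $m_1(E\cap[\epsilon,R])>0$ for some $\epsilon>0$ by continuity of measure. Hence
\[
	\int_E t\,dm_1(t)\ \ge\ \epsilon\, m_1(E\cap[\epsilon,R])>0,
\]
and so $m_2(J_E)>0$.

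The boundedness hypothesis is in fact not needed for (2), since $t>0$ almost everywhere on $E$ suffices in general; it only guarantees that $m_2(J_E)$ is finite.
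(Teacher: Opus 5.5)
Your proof is correct and follows essentially the same route as the paper: polar coordinates for the area formula, and the same case split $z_0=0$ versus $z_0\neq0$ for the interior statement (1). The one small deviation is in (2), where you get $m_2(J_E)=0$ from $m_1(E)=0$ because the integral over a null set vanishes, instead of using the paper's bound $m_2(J_E)\le 2\pi R\,m_1(E)$; this is why your closing remark that the boundedness hypothesis is not needed is correct.
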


\begin{proof}
	If~$(a,b) \subseteq E$,
	we trivially have that~$J_E$ contains the annulus~$\{z\in\CC \such a<|z|<b\}$ and thus has interior.
	(The value of~$b$ can be~$+\infty$ here.)
	Conversely, if~$J_E$ has interior,
	then there is some~$z_0\in J_E$ and~$r>0$ such that~$D(z_0,r) \subseteq J_E$.
	If~$z_0=0$, then~$(0,r)\subseteq E$.
	If~$z_0\neq0$, choose~$0 < \varepsilon < \min\{r,|z_0|\}$.
	Then,~$(|z_0|-\varepsilon, |z_0|+\varepsilon) \subseteq E$.
	This proves (1).
	
	Since~$E$ is Borel-measurable and the map~$z\mapsto|z|$ is continuous,~$J_E=|\cdot|^{-1}(E)$ is~$m_2$-measurable.
	Using polar coordinates for any~$m_2$-measurable set~$A\subseteq\mbC\simeq\mbR^2$,
	\[
		m_2(A)=\int_0^{2\pi}\int_0^{+\infty} \mathbf{1}_A \big(te^{i\theta}\big)t \,dt \,d\theta.
	\]
	In particular for~$A=J_E$, we have~$\mathbf{1}_{J_E}(te^{i\theta})=\mathbf{1}_E(t)$, which gives
	\[
		m_2(J_E)=\int_0^{2\pi}\int_0^{+\infty} \mathbf{1}_E(t) t \,dt \,d\theta
		=2\pi\int_E t\,dm_1(t).
	\]
	
	If, in addition,~$E\subseteq[0,R]$ for some~$R\in[0,+\infty)$, then~$t \le R$ for~$t \in E$.
	From the above equality, this implies that~$m_2(J_E)\le 2\pi R\,m_1(E)$,
	and consequently if~$m_2(J_E)>0$, then~$m_1(E)>0$.
	On the other hand,~$t>0$ on~$(0,R)$ (for~$R>0$) and if~$m_1(E)>0$, then~$m_2(J_E)=2\pi\int_E t\,dm_1(t)>0$.
\end{proof}

\Cref{lem:interior_area} combined with \cref{thm:monomial_julia_accumulation} tells us a Julia set of a monomial sequence has empty interior if, and only if,~$E$ does not contain an interval,
which in turn happens if, and only if,~$S$ does not contain an interval.
Moreover, if~$S\cap\mbR \subseteq [a,b]$ for some~$a,b\in\mbR$,
then the Julia set has positive area if, and only if,~$m_1(S\cap\mbR) > 0$.
Therefore, the construction of a sequence of monomials for which the Julia set has empty interior but positive area becomes equivalent to that of the construction of a Smith-Volterra-Cantor set, or a fat Cantor set.

We note that in the autonomous setting Cantor-circles Julia sets have already been constructed \cite{McMullen1988,QIU2013},
but the resulting Julia sets have planar Lebesgue measure zero.
In the literature, a \emph{Cantor set of circles} (or \emph{Cantor circles}) refers to a set consisting of uncountably many Jordan curves and homeomorphic to~$\mcC\times\mbT$,
where~$\mcC$ is a Cantor set and~$\mbT$ the unit circle.

\begin{eg}
\label{eg:jul_no_int_pos_area}
	Let~$\mcC\subseteq[0,1]$ be the fat Cantor set obtained by removing the middle open interval of length~$4^{-k}$ from each (closed) interval in the finite union at stage~$k$.
	It is known that~$\mcC$ has a positive~$m_1$-measure and no interior.
	
	Next, set~$S = -\log\mcC = \{-\log c \such c\in\mcC\} \subseteq \overline{\mbR}$ and~$J_\mcC=\{z\in\mbC \such |z|\in\mcC\}$.
	We can find a sequence~$(s_n)_{n\in\mbN}$,
	whose accumulation points are exactly the set~$S$,
	and thus a sequence of monomials~$\seq{p}$
	so that
	\[
		\jul(\seq{p}) = \bigcup\nolimits_{s \in S}\{z\in\CC \such |z|=e^{-s}\} = J_\mcC
	\]
	owing to \cref{thm:monomial_julia_accumulation}.
	According to \cref{lem:interior_area},~$J_\mcC$ has also no interior but positive~$m_2$-measure,
	and therefore we have constructed a Julia set of a monomial sequence with these properties.
\qed
\end{eg}

The construction of the sequence~$(s_n)_{n\in\mbN}$ above can be done following the construction of the fat Cantor set itself instead of the method described at the beginning of \cref{sec:examples and contructions}.

It is also worth noting that \cite[Example~1.19]{Stankewitz2011} is the Julia set of a \emph{polynomial semigroup} generated by two monomials, which is a Cantor set of circles.
(See \cite{Hinkkanen1996} for background on rational semigroups).
In that setting, the Fatou set is defined as the set of normality of the family consisting of all finite compositions of the generators in all possible orders.
By contrast, our construction in \cref{eg:jul_no_int_pos_area} uses a \emph{single prescribed non-autonomous sequence} of monomials.

This fits naturally into a fiber-wise or a skew-product viewpoint.
Let~$\Gamma := \{p_n \such n\ge 1\}~$ be the (countable) collection of monomials that occur in \cref{eg:jul_no_int_pos_area},
and set~$\Sigma := \Gamma^{\mbN}$.
Each~$\seq{x}=(\gamma_1,\gamma_2,\dots)\in\Sigma$ is a \emph{base point} for the associated skew product,
and the copy~$\{\seq{x}\}\times\CC$ is the \emph{fiber over~$\seq{x}$}.
The point~$\seq{x}$ determines a sequence of non-autonomous composition iterates~$ F_\seq{x}^{(n)} := \gamma_n\circ\dots\circ\gamma_1$ for~$n\ge 1$, 
and we can define the \emph{fiber-wise Julia set} over~$\seq{x}$ by
\[
	\jul_\seq{x}
	:= \big\{ z\in\CC \such \{F_\seq{x}^{(n)}\}_{n\ge 1} \inmath{is not normal in any neighbourhood of} z \big\}.
\]
With this notation, our original sequence~$\seq{p}=(p_1,p_2,\dots)$ corresponds to the base point~$\seq{x}_0=(p_1,p_2,\dots)\in\Sigma$ so that the fiber over~$\seq{p}$ is~$\{\seq{x}_0\}\times\CC$.
The associated non-autonomous iterates are~$F^{(n)}_{\seq{x}_0} = p_n\circ\dots\circ p_1$,
hence our non-autonomous Julia set coincides with the fiber-wise Julia set over~$\seq{x}_0$,
that is~$\jul(\seq{p})=\jul_{\seq{x}_0}$.
This is the same fiber-wise notion of a Julia set along a sequence defined in \cite{Stankewitz2011},
where one considers sequences of choices from a finite generating set.


\subsection{Non-uniformly perfect Julia sets}
\label{sec:NUP Julia}


It is known that in the autonomous case for any rational map with degree at least~$2$ the Julia set is infinite and perfect.
Already from \cref{thm:monomial_julia_accumulation}, it is evident that for monomial sequences this need not be the case.
In fact, one can find monomial sequences for which any of the following can hold:
\begin{enumerate}
	\item The Julia set has non-empty interior without being the entire sphere~$\CC$.
	\item The Julia set has~$0$ and/or~$\infty$ as isolated point(s) and also contains a circle or an annulus;
	hence it is not perfect, with or without non-empty interior
	(\cref{eg:not_perfect_not_UP}).
	\item The Julia set can be one of the degenerate cases~$\{0\}$,~$\{\infty\}$ or~$\{0,\infty\}$
	(\cref{thm:any_finite_subset}).
\end{enumerate}

The points~$0$ and~$\infty$ are the only possible isolated points a Julia set of a monomial sequence can have,
and these can be precisely identified using \cref{cor:monomial_isolated_points}.
In fact, this corollary gives a concrete criterion for whether the Julia set of a monomial sequence is perfect or not.

However, there is a stronger notion of perfectness, \emph{uniform perfectness}, introduced by Pommerenke in \cite{Pommerenke1979}.
A \emph{conformal annulus}~$A$ is a doubly connected domain in~$\mbC$,
which is conformally equivalent to a round annulus
\[
	A(z_0;r,R) = \{z\in\mbC \such r<|z-z_0|<R\} \afterline{for some} z_0\in\mbC \inmath{and} 0 \leq r < R < +\infty.
\]
By the Riemann mapping theorem for doubly connected domains, the radii~$r$ and~$R$ are uniquely determined up to a multiplicative factor,
i.e.\ the ratio~$R/r$ is uniquely determined by~$A$.
The \emph{modulus} of a conformal annulus~$A$ is defined as
\[
	\mod A := \log(R / r),
\]
whenever~$A$ conformally equivalent to~$A(z_0;r,R)$.
The quantity~$\mod A$ is, in fact, conformally invariant:
if~$\varphi: A \to B$ is a conformal map, then~$\mod A = \mod B$.
We note that some authors include a factor of~$\frac{1}{2\pi}$ in the definition of the modulus.

\begin{dfn}
\label{dfn:separating_annulus and UP}
	A conformal annulus~$A$ is said to \emph{separate} a set~$F\subseteq\mbC$ whenever~$F\cap A=\emptyset$ and~$F$ intersects both (connected) components of~$\mbC \setminus A$.
	
	A compact set~$F\subseteq\mbC$ with at least two points is called \emph{uniformly perfect} whenever the collection of all conformal annuli which separate it has uniformly bounded modulus.
\end{dfn}

Uniform perfectness is stronger than perfectness
in the sense that for a compact set~$F\subseteq\mbC$ with at least~$2$ points,
if~$F$ is uniformly perfect, then it is also perfect.
Indeed, if~$z_0\in F$ is isolated in~$F$,
then~$\delta := \dist(z_0, F\setminus\{z_0\})$ is necessarily positive.
However, \emph{for any}~$r \in (0,\delta)$ the annulus~$A(z_0;r,\delta)$ separates~$F$,
which is a contradiction as~$\mod A(z_0; r,\delta) = \log (\delta/r)\to+\infty$ as~$r\to0$.
Obviously, a non-perfect set cannot be uniformly perfect.

It is known \cite{Hinkkanen,ManeDaRocha} that for autonomous iterations of rational functions of degree at least~$2$ the Julia sets are necessarily uniformly perfect.
In the non-autonomous setting, ``bounded sequences of polynomials'' (defined, for example, in \cite{ComerfordSumi-ThickThin})
also yield uniformly perfect Julia sets \cite{SumiFiberedRational2006}.
Thus, to obtain a non-uniformly perfect Julia set, one must look into Julia sets of sequences of polynomials, which are not ``bounded polynomials''.
With the help of \cref{thm:monomial_julia_accumulation}, one can indeed find such a sequence of \emph{monomials}.
We also note that any connected set is automatically uniformly perfect,
since no annulus can separate a connected set.
So any non-uniformly perfect Julia set must necessarily be disconnected.
Below, we provide two relevant examples.

\begin{eg}
\label{eg:not_up_but_perfect}
	There exists a sequence of monomials,
	whose Julia set is perfect but not uniformly perfect.
\end{eg}

\begin{proof}
	Let~$b>1$,
	and consider the set~$S=\{n^2 \log b \such n\in\mbN\}\cup\{+\infty\}$.
	As explained at the beginning of \cref{sec:examples and contructions},
	we can always find a sequence of monomials~$\seq{p}$,
	which produces a sequence~$(s_n)_{n\geq1}$ with accumulation points exactly the set~$S$.
	Particularly, we can take 
	\[
		p_n(z) = b^{2^n(t_n-t_{n-1})}z^2
	\inline{where}
		t_0 = 0 \inmath{and} t_{k(k-1)/2 + j} = j^2 \inmath{for} 1 \le j \le k
	\]
	so that~$s_n = t_n \log b$.
	On the other hand, this set~$S$ gives rise to the Julia set
	\[
		\jul(\seq{p}) = \{0\} \cup \bigcup\nolimits_{n\geq1} C(0,b^{-n^2})
	\]
	owing to \cref{thm:monomial_julia_accumulation}.
	Clearly,~$\jul(\seq{p})$ is perfect.
	To see it is not uniformly perfect,
	consider the annuli~$A(0;b^{-(n+1)^2},b^{-n^2})$,
	which separate~$\jul(\seq{p})$ for all~$n\geq1$,
	but have modulus
	\[
		\mod A = \log\frac{b^{-n^2}}{b^{-(n+1)^2}} = (2n+1)\log b \to +\infty \afterline{as} n\to+\infty.
\qedhere
	\]
\end{proof}

\begin{eg}
\label{eg:not_perfect_not_UP}
	There exists a sequence of monomials,
	whose Julia set is not perfect.
\end{eg}

\begin{proof}
	Consider the sequence~$(s_n)_{n\geq0}$ defined by
	\[
		s_0=0,	\quad	s_{2k}=0, \inline{and} s_{2k-1}=k \afterline{for} k\geq1
	\]
	with points of accumulation the set~$S=\{0,+\infty\}$.
	For an appropriate sequence of monomials~$\seq{p}$, we get the (non-perfect) Julia set~$\jul(\seq{p}) = \{0\} \cup C(0,1)$ through \cref{thm:monomial_julia_accumulation}.
\end{proof}


\subsection{Finite Julia sets}
\label{sec:finite Julia}


In contrast to the autonomous iteration of a single rational map of degree at least~$2$, finite Julia sets do occur in the non-autonomous setting.
In fact, every finite subset of the Riemann sphere can be realised already by a sequence of \emph{polynomials}.
After we had produced our examples to showcase this fact,
we realised similar ones already existed in \cite{Bue1995}.
We present them here nonetheless to further showcase the versatility of \cref{thm:monomial_julia_accumulation} when combined with the invariance principle \cref{thm:invariance deterministic}.

\begin{thm}
\label{thm:any_finite_subset}
	Every finite subset of~$\CC$ is the Julia set of some sequence of polynomials.
	In particular, finite Julia sets of arbitrary cardinality can occur.
\end{thm}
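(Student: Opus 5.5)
The plan is to realise the given finite set $B$ by prefixing a monomial sequence with a single polynomial and pulling back via \cref{thm:invariance deterministic}, in the spirit of \cref{thm:pullback_levelsets}. The empty set is trivial: take $\seq{p}=(z,z,z,\dots)$, or any sequence of degree-one maps with normal iterates. So let $B$ be non-empty.

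\emph{Case $B\subseteq\mbC$.} First choose a monomial sequence with $\jul(\seq{p})=\{0\}$. By \cref{thm:monomial_julia_accumulation}, this amounts to making $S=\{+\infty\}$, for example $s_n=n$, which gives quadratic monomials $p_n(z)=e^{2^n}z^2$ via the construction above. Then $0$ lies in the Julia set, and every other point, including $\infty$, lies in the Fatou set. Next let $f(z)=\prod_{b\in B}(z-b)$, a non-constant polynomial whose zero set is exactly $B$. For $\tilde{\seq{p}}=(f,p_1,p_2,\dots)$, \cref{thm:invariance deterministic} gives $\jul(\tilde{\seq{p}})=f^{-1}\{0\}=B$. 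All maps involved are polynomials.

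\emph{Case $\infty\in B$.} Use a monomial sequence with $S=\{+\infty,-\infty\}$, for instance $s_n=(-1)^n n$, so that $\jul(\seq{p})=\{0,\infty\}$ by \cref{thm:monomial_julia_accumulation}. If $B=\{\infty\}$, instead take $S=\{-\infty\}$, i.e.\ $s_n=-n$. Writing $B'=B\setminus\{\infty\}$, take $f(z)=\prod_{b\in B'}(z-b)$ when $B'\neq\emptyset$. Since $f$ is a polynomial, $f^{-1}\{\infty\}=\{\infty\}$, so
\[
	\jul(\tilde{\seq{p}})=f^{-1}\{0,\infty\}=B'\cup\{\infty\}=B.
\]

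The only step needing care is checking that the degree hypothesis of \cref{thm:monomial_julia_accumulation} holds, which it does since the monomials are quadratic. We also need that prescribing $s_n$ is always possible; this follows from $|a_n|=e^{D_n(s_n-s_{n-1})}$ as shown above. The statement about arbitrary cardinality then follows immediately by taking $B$ with $k$ points for any $k$.
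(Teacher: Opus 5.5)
Your proposal is correct and is essentially the paper's own proof. The paper also uses quadratic monomials with $s_n=n$, $s_n=-n$ or an alternating sequence to realise $\{0\}$, $\{\infty\}$ and $\{0,\infty\}$ via \cref{thm:monomial_julia_accumulation}, then prefixes $f(z)=\prod(z-b)$ and pulls back with \cref{thm:invariance deterministic}. The only cosmetic difference is that you absorb the cases $\{0\}$ and $\{0,\infty\}$ into the pullback step (taking $f(z)=z$), whereas the paper lists them separately.
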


\begin{proof}
	We first construct a tentative monomial sequence,
	on which we will be applying \cref{thm:monomial_julia_accumulation}.
	We set~$d_n=2$ for all~$n\in\mbN$,~$s_0 := 0$, and for a prescribed real sequence~$(s_n)_{n\ge1}$
	\[
		p_n(z) := a_n z^2,
		\inline{with}
		a_n := e^{2^n(s_n-s_{n-1})}
		\afterline{for all} n\geq1.
	\]
	Then,~$D_n=2^n$ and~$s_n = \frac{1}{2^n}\log|A_n|=\sum_{i=1}^n \frac{1}{2^i} \log a_i$ for all~$n\ge1$,
	and let~$S\subseteq\overline{\mbR}$ be the set of accumulation points of~$(s_n)$.
	
	Now, let~$A$ be a finite subset of the Riemann sphere~$\CC$.
	
	If~$A=\varnothing$,
	take~$q_n(z)=z$ for all~$n\geq1$;
	then~$\jul(\seq{q})=\varnothing$.
	
	If~$A=\{0\}$,
	take~$a_n = e^{2^n}$ so that~$s_n=n$ and~$S=\{+\infty\}$.
	Then,~$\jul(\seq{p}) = \{0\} = A$.
	
	If~$A=\{\infty\}$,
	take~$a_n=e^{-2^n}$ so that~$s_n=-n$ and~$S=\{-\infty\}$.
	Then,~$\jul(\seq{p}) = \{\infty\} = A$.
	
	If~$A=\{0,\infty\}$,
	take
	\begin{equation}
	\label{eq:coefficients for jul 0 and infty}
		a_n =
			\begin{cases}
				e^{-n2^n}	&	\text{when}\ n\ \text{is even}
			\\	e^{n2^n}	&	\text{when}\ n\ \text{is odd}
			\end{cases}
		\inline{so that}
		s_n =
			\begin{cases}
				-\frac{n}{2}	&	\text{when}\ n\ \text{is even}
			\\	\frac{n+1}{2}	&	\text{when}\ n\ \text{is odd}
			\end{cases}
	\end{equation}
	and~$S=\{-\infty,+\infty\}$.
	Then,~$\jul(\seq{p}) = \{0,\infty\}$.
	
	If~$A=\{z_1,z_2,\dots,z_k\} \subseteq \CC\setminus\{\infty\}$ for some~$k\in\mbN$,
	take~$a_n=e^{2^n}$ and~$f(z)=(z-z_1)(z-z_2)\dots(z-z_k)$,
	and consider the sequence~$\tilde{\seq{p}} = (f,p_1,p_2,\dots)$.
	Then,
	\[
		\jul(\tilde{\seq{p}}) = f^{-1} \jul(\seq{p}) = f^{-1} \{0\} = \{z_1,z_2,\dots,z_k\} = A.
	\]
	
	If~$A=\{z_1,z_2,\dots,z_k,\infty\}$ for some~$k\in\mbN$,
	take~$a_n$ as in \eqref{eq:coefficients for jul 0 and infty} and~$f(z)=(z-z_1)(z-z_2)\dots(z-z_k)$,
	and consider the sequence~$\tilde{\seq{p}} = (f,p_1,p_2,\dots)$.
	Then,
	\[
		\jul(\tilde{\seq{p}}) = f^{-1} \jul(\seq{p}) = f^{-1} \{0,\infty\} = \{z_1,z_2,\dots,z_k,\infty\} = A.
\qedhere
	\]
\end{proof}


\section*{Acknowledgements}


LP was supported by Villum Fonden Grant No.~25452 and Grant No.~60842, QMATH Center of Excellence Grant No.~10059 as well as Danish e-infrastructure Consortium (DeiC) Grant No.~5260-00014B.

CLP acknowledges the support from Danmarks Frie Forskningsfond Grant No.~1026-00267B.

DV was supported by the Alexander von Humboldt-Stiftung through the Humboldt Professorship awarded to Stefanie Petermichl.


\printbibliography


\end{document}